\theoremstyle{plain}
\newtheorem{theorem}{Theorem}[section]
\newtheorem{corollary}[theorem]{Corollary}
\newtheorem{lemma}[theorem]{Lemma}
\newtheorem{proposition}[theorem]{Proposition}
\theoremstyle{definition}
\newcommand{\abs}[1]{\lvert#1\rvert}
\renewcommand{\le}{\leqslant}
\renewcommand{\ge}{\geqslant}
\renewcommand{\mid}{\::\:}
\newcommand\cstar{{\rm C}^*}                       
\newcommand\cstare{{\rm C}_{\rm e}^*}         
\def\rk{{\rm rank}\,}
\def\tr{\mathrm{tr}\,}
\def\bbC{\mathbb C}
\def\bbN{\mathbb N}
\def\bbR{\mathbb R}
\def\bbT{\mathbb T}
\def\cA{\mathcal A}
\def\cB{\mathcal B}
\def\cH{\mathcal H}
\def\cK{\mathcal K}
\def\cL{\mathcal L}
\def\cS{\mathcal S}
\def\cT{\mathcal T}
\DeclareMathOperator{\diag}{diag}
\DeclareMathOperator{\ran}{ran}
\DeclareMathOperator{\Res}{Res}
\begin{document}
\baselineskip 18pt

\title[Isometries of the Toeplitz Matrix Algebra]
{Isometries of the Toeplitz Matrix Algebra}

\author[D.~Farenick]{Douglas Farenick}
\address{Department of Mathematics and Statistics, University of Regina,
Regina, Saskatchewan S4S 0A2, Canada}
\email{douglas.farenick@uregina.ca}

\author[M.~Mastnak]{Mitja Mastnak}
\address{Department of Mathematics and Computing Science, Saint Mary's University,
Halifax, Nova Scotia B3H 3C3, Canada}
\email{mmastnak@cs.smu.ca}

\author[A.I.~Popov]{Alexey I. Popov}
\address{School of Mathematics and Statistics,
Newcastle University, Newcastle upon Tyne
NE1 7RU, United Kingdom}
\email{alexey.popov@ncl.ac.uk}

\thanks{This work supported in part by NSERC (Canada)}
\keywords{Toeplitz matrix, Toeplitz algebra, isometry, complete isometry, unital operator algebra, C$^*$-envelope}
\subjclass[2010]{Primary 15B5, 47L55. Secondary 15A60, 46L07}

\date{\today.}
 \begin{abstract}
 We study the structure of isometries defined on the algebra $\cA$ of upper-triangular Toeplitz matrices. Our first result is that a continuous multiplicative isometry $\cA\to M_n$ must be of the form either $A\mapsto UAU^*$ or $A\mapsto U\overline AU^*$, where $\overline A$ is the complex conjugation and $U$ is a unitary matrix. In our second result we use a range of ideas in operator theory and linear algebra to show that every linear isometry $\cA\to M_n(\bbC)$ is of the form $A\mapsto UAV$ where $U$ and $V$ are two unitary matrices. This implies, in particular, that every such an isometry is a complete isometry and that a unital linear isometry $\cA\to M_n(\bbC)$ is necessarily an algebra homomorphism.
 \end{abstract}

\maketitle

\section{Introduction}\label{introduction-section}

The $n\times n$ upper-triangular Toeplitz matrices over the field of complex numbers form a unital abelian subalgebra $\cA$
of the algebra $M_n(\bbC)$ of all $n\times n$ complex matrices. Our concern in this paper is with the structure of linear isometric maps
$\varphi:\cA\rightarrow M_n(\bbC)$, where the norm of a matrix $X\in M_n(\bbC)$ is the norm induced by considering $X$ as a linear operator on
the Hilbert space $\bbC^n$ with respect to the standard inner product. That there might be something of interest to deduce about such linear maps
is suggested by a result of Farenick, Gerasimova, and Shvai~\cite{FGS11} which arose from their
study of complete unitary-similarity invariants for certain complex matrices. Their result states that
if $\varrho:\cA\rightarrow M_n(\bbC)$ is a unital isometric homomorphism, then there is a unitary $U\in M_n(\bbC)$ 
such that $\varrho(X)=UXU^*$ for every $X\in\cA$.
In other words, every unital isometric homomorphism of the Toeplitz algebra $\cA$ back into $M_n(\bbC)$ extends
to an isometric automorphism of the algebra $M_n(\bbC)$. As a consequence of the results of the
present paper, this conclusion is also true for unital isometric maps that are merely linear. Hence, if a unital
linear map $\varphi:\cA\rightarrow M_n(\bbC)$ fails to be multiplicative, then the map cannot be an isometry. A similar conclusion is true for continuous maps that are multiplicative but not necessarily linear.

Every linear isometric map of an operator space into $M_n(\bbC)$ is completely bounded \cite[Proposition 8.11]{Paulsen-book},
but there are many examples of linear isometries that fail to be completely isometric---the transpose map
on $M_n(\mathbb C)$ being the most famous basic example. However, the restriction of the transpose map to $\cA$ is
completely isometric and it is a consequence of our work here
that every linear isometric map $\varphi:\cA\rightarrow M_n(\bbC)$
is completely isometric.  Thus, the results of this paper align with other results in which 
linear isometries of operator algebras are necessarily completely isometric (for example, the relevant results on
isometries of nest algebras and reflexive algebras in \cite{AS90, MT89, MT91, MQS94}).

There is a vast literature on the structure of maps defined on the algebra of complex $n\times n$ matrices that
preserve some properties of interest (such as the norm of a matrix, the spectrum, the rank, and so forth). A sample
list of papers devoted to ``preserver problems'' is \cite{BS00, CLS05, CL98, Kad51, LT92, LPS04, LSS03, M71, S81}.
Many such results depend on the use of matrix units or the abundance of rank-1 matrices in the full matrix algebra $M_n(\bbC)$.
Our contribution in this direction is rather novel in that we consider linear maps on a small subspace of matrices with limited structure
and which has just one (up to scalar multiple) rank-1 matrix and matrix unit.

Our main results in this paper are Theorem~\ref{multiplicative-structure} and Theorem~\ref{structure}. In the first result we show that every continuous multiplicative isometry $\cA\to M_n$ is of the form $A\mapsto UAU^*$ or of the form $A\mapsto U\overline{A}U^*$, where $U$ is a unitary matrix and $\overline{A}$ denotes the complex conjugation. In our second result we show that for every
linear isometry (not necessarily multiplicative) $\varphi:A\to M_n(\bbC)$ there exist two $n\times n$ unitary matrices $U$ and $V$ such that
$\varphi(A)=UAV$ for every $A\in\cA$. The proofs use a mix of algebra, matrix analysis, and operator theory.

Throughout the paper, we will use the symbol $S$ to denote the $n\times n$ nilpotent Jordan block of rank $n-1$:
\begin{equation}\label{eq:S}
S=\begin{bmatrix}
0 & 1 &        &        &   \\
  & 0 & 1      &        &   \\
  &   & \ddots & \ddots &   \\
  &   &        & 0      & 1 \\
  &   &        &        & 0
\end{bmatrix}
\end{equation}
(here, empty spaces mean zero entries). The Toeplitz matrix algebra $\cA$ consists of all matrices of
the form $f(S)$, where $f$ is an arbitrary complex polynomial. As $\cA$ contains the identity matrix $I$, the Toeplitz matrix
algebra is a unital operator algebra. The vector subspace $\cT=\cA+\cA^*$ of $M_n(\bbC)$
consists of all $n\times n$ Toeplitz matrices;
because $\cT$ contains the identity and is closed under the adjoint map $X\mapsto X^*$, the space $\cT$ is an
operator system \cite[Chapter 2]{Paulsen-book}. 

The norm $\|A\|$ of $A\in M_n(\bbC)$ is given by $\|A\|=\max\{\|Ax\|,:\,x\in\bbC^n,\;\|x\|=1\}$, where the norm of $x\in\bbC^n$
is the Hilbert space norm $\|x\|=\langle x,x\rangle^{1/2}$ and where $\langle \cdot,\cdot\rangle$ is the standard inner product on $\bbC^n$.
In contrast to the situation for Toeplitz operators acting on the Hardy space $H^2(\bbT)$, the exact determination of the
norm of a Toeplitz matrix is difficult, even in the case of $n=2$.

If $\cL\subset \cB(\cH)$ is a subspace, then a linear map $\varphi:\cL\rightarrow \cB(\cK)$ is said to be
completely contractive if the linear map $\varphi^{(k)}=\varphi\otimes{\rm id}_{M_k(\bbC)}:\cL\otimes M_k(\bbC)\rightarrow\cB(\cK)\otimes M_k(\bbC)$
is contractive for every $k\in\bbN$, and is completely isometric if every $\varphi^{(k)}$ is an isometry. (Here, $\cB(\cH)$ and $\cB(\cK)$
are the algebras of bounded linear operators acting on complex Hilbert spaces $\cH$ and $\cK$.) The map $\varphi$ is completely 
bounded if there is a $R>0$ such that $\|\varphi^{(k)}\|\leq R$ for all $k\in\mathbb N$. If $\cL$ contains the identity of $\cB(\cH)$, then
$\cL$ is called a unital operator space, and if a unital operator space $\cL$ is closed under the adjoint map, then $\cL$ is called an
operator system. Linear maps $\cL_1\rightarrow\cL_2$ of untial operator spaces that send the identity of $\cL_1$ to the
identity of $\cL_2$ are said to be unital.

Throughout this paper, $n$ shall remain fixed, $\cA$ shall always denote the unital, abelian subalgebra of $M_n(\bbC)$ consisting
of all upper-triangular Toeplitz matrices, and $\cT$ shall denote the operator subsystem of $M_n(\bbC)$ consisting of all Toeplitz matrices.


\section{Continuous Multiplicative Isometries}\label{multiplicative-section}

In this section we study isometric maps $\cA\to M_n(\bbC)$ which preserve the product of matrices but are not necessarily linear.

\begin{lemma}
Let $T\in M_n(\bbC)$ be a nilpotent matrix such that $||T||=||T^{n-1}||=1$.  Then $T$ is unitarily similar to $S$.
\end{lemma}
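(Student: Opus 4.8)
The key observation is that the hypothesis $\|T^{n-1}\| = 1$ forces $T^{n-1} \neq 0$, so $T$ is a nilpotent matrix of index exactly $n$ (since $T^n = 0$ for any nilpotent $n\times n$ matrix). Consequently $T$ has a single Jordan block, meaning $T$ is \emph{similar} to $S$; the work lies in upgrading ``similar'' to ``unitarily similar'' using the norm constraints. I would start by recording that $T^{n-1}$ has rank exactly $1$: its image is spanned by a single unit vector (after normalization) because the Jordan structure of $T$ forces $\operatorname{rank}(T^{n-1}) = 1$.

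Next I would set up an orthonormal basis adapted to the problem. Pick a unit vector $e_n$ in the kernel of $T$ that is \emph{not} in the image of $T$ — more precisely, pick $e_n$ so that $T^{n-1}$ maps some vector onto a multiple of a chosen unit vector $e_1 = T^{n-1}x/\|T^{n-1}x\|$, and then build a Jordan chain. Since $T$ is a single Jordan block, there is a vector $v$ with $T^{n-1}v \neq 0$, and the vectors $v, Tv, \dots, T^{n-1}v$ form a basis. The plan is to show this basis can be chosen orthonormal (up to scalars on each rung), which is where the norm hypotheses enter. The crucial point: if $\|T\| = 1$, then $T$ is a contraction, and $1 = \|T^{n-1}v\| \le \|T\|^{n-1}\|v\| = \|v\|$ forces $\|v\| = 1$ and moreover forces each intermediate step to be norm-preserving, i.e. $\|T^k v\| = 1$ for all $0 \le k \le n-1$.

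From $\|T^k v\| = \|T^{k-1}v\|$ with $\|T\|=1$, a contraction that preserves the norm of a vector $w = T^{k-1}v$ must satisfy $T^*T w = w$ (a contraction acts isometrically on such $w$); iterating, one deduces that the vectors $v, Tv, \dots, T^{n-1}v$ are pairwise orthogonal — this is the standard fact that for a contraction $T$, if $\|T^k v\| = \|v\|$ for all $k$ up to $n-1$, then $\{T^j v\}$ is an orthogonal set. Normalizing, we get an orthonormal basis $\{e_j\}$ with $e_j = T^{n-j}v$ (suitably indexed), and in this basis $T$ acts as $T e_{j} = e_{j-1}$ on the chain and $T e_1 = 0$, which is precisely the matrix $S$ (up to the ordering convention in~\eqref{eq:S}). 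The change-of-basis matrix from the standard basis to $\{e_j\}$ is unitary, giving the unitary similarity.

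The main obstacle I anticipate is the orthogonality step: carefully justifying that a contraction preserving the norms $\|v\|, \|Tv\|, \dots, \|T^{n-1}v\|$ yields an orthogonal chain, rather than merely that consecutive vectors behave well. The cleanest route is probably to argue via the polar decomposition or via the fact that $\|T\| \le 1$ together with $\|T^{n-1}v\| = \|v\|$ implies that $T$ restricted to the cyclic subspace generated by $v$ is a (shift-like) partial isometry; the equality case in $\|T^{n-1}v\| \le \prod \|T\cdot\|$ is rigid. An alternative is to invoke the Sz.-Nagy dilation / the structure of finite-dimensional contractions with $\|T^{n-1}\|$ maximal, but the direct computation with the Jordan chain should suffice and keeps the argument self-contained.
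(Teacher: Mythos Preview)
Your plan is correct and leads to a complete proof, but it takes a genuinely different route from the paper's argument. The paper first puts $T$ into Schur (strictly upper-triangular) form via a unitary similarity, calls the super-diagonal entries $x_1,\dots,x_{n-1}$, and observes that $\|T\|=1$ gives $|x_i|\le 1$ while $T^{n-1}=x_1\cdots x_{n-1}\,S^{n-1}$ and $\|T^{n-1}\|=1$ give $|x_1\cdots x_{n-1}|=1$; hence each $|x_i|=1$. A further diagonal unitary similarity by $\operatorname{diag}(1,x_1,x_1x_2,\dots)$ then normalizes the super-diagonal to all $1$'s, and (implicitly, via $\|T\|=1$ and a column-norm argument) the remaining entries must vanish, yielding $S$.

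Your approach instead builds the orthonormal Jordan chain directly: choose a unit vector $v$ with $\|T^{n-1}v\|=1$, deduce $\|T^kv\|=1$ for all $0\le k\le n-1$ from the contraction property, and then prove the chain is orthonormal. The orthogonality step you flagged as the main obstacle does go through cleanly: from $\|T(T^kv)\|=\|T^kv\|$ and $\|T\|\le 1$ one gets $T^*T(T^kv)=T^kv$ for $0\le k\le n-2$, hence $\langle T^av,T^kv\rangle=\langle T^{a+1}v,T^{k+1}v\rangle$ whenever $k\le n-2$; iterating until one factor reaches $T^nv=0$ gives $\langle T^mv,v\rangle=0$ for all $m\ge 1$, and then the general case reduces to this. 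The paper's argument is shorter and more elementary (pure matrix manipulation), while yours is the natural operator-theoretic characterization of the shift among contractions and would generalize, for instance, to the infinite-dimensional setting.
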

\begin{proof}
We can, up to a unitary similarity, assume that $T$ is strictly upper triangular.  Denote the super-diagonal entries of $T$ by $x_1, \ldots, x_{n-1}$.  Since $||T||=1$, we have that for each $i$, $|x_i|\le 1$.  Note that $T^{n-1}=x_1\ldots x_{n-1}S^{n-1}$ and conclude, using $||T^{n-1}||=1$, that $|x_1\ldots x_{n-1}|=1$.  Therefore $|x_1|=\ldots=|x_{n-1}|=1$.  Now a suitable diagonal unitary similarity (via $\operatorname{diag}\left(1, x_1, x_1x_2,\ldots, x_1x_2\ldots x_{n-1}\right)$) yields $T\sim S$.
\end{proof}

\begin{lemma} Let $\varphi\colon \cA \to M_n(\bbC)$ be a semigroup homomorphism such that $\varphi(S)=S$.  Then $\varphi(\cA)\subseteq \cA$.
\end{lemma}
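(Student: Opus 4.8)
The plan is to exploit the fact that the Toeplitz algebra $\cA$ is exactly the commutant of $S$ in $M_n(\bbC)$. The first step is to record this identity $\{S\}'=\cA$: it holds because $S$ is a single nilpotent Jordan block and is therefore nonderogatory (its minimal and characteristic polynomials both equal $x^n$), so that $\{S\}'$ has dimension $n$ and coincides with $\bbC[S]=\cA$. Equivalently, a short entrywise computation shows that $MS=SM$ forces the entry $M_{ij}$ to depend only on $j-i$ and to vanish when $j<i$, i.e.\ that $M$ is an upper-triangular Toeplitz matrix.

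With this in hand the lemma follows quickly. Fix $A\in\cA$. Since $\cA$ is commutative, $AS=SA$; applying the semigroup homomorphism $\varphi$ and using $\varphi(S)=S$ gives
\[
\varphi(A)S=\varphi(A)\varphi(S)=\varphi(AS)=\varphi(SA)=\varphi(S)\varphi(A)=S\varphi(A).
\]
Thus $\varphi(A)$ commutes with $S$, so $\varphi(A)\in\{S\}'=\cA$, which is the desired conclusion.

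I do not anticipate a genuine obstacle here: the only substantive ingredient is the identification $\{S\}'=\cA$, which is standard linear algebra (the point to be careful about is that the commutant is no larger than $\bbC[S]$, which is where nonderogatoricity is used), and everything else is a one-line consequence of multiplicativity together with the hypothesis $\varphi(S)=S$. Note in particular that neither linearity nor continuity of $\varphi$ is needed. It is also worth observing, by applying the conclusion to $A=I$, that $\varphi(I)$ lies in $\cA$ and is idempotent; since the only idempotents in $\cA$ are $0$ and $I$, and $\varphi(I)S=\varphi(S)=S\neq 0$, we must in fact have $\varphi(I)=I$, a fact that is convenient in the subsequent development.
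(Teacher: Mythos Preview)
Your proof is correct and follows essentially the same approach as the paper: both use that $\varphi(A)$ commutes with $S$ by multiplicativity and then invoke the identity $\{S\}'=\cA$. You supply more detail on why the commutant of $S$ equals $\cA$ (which the paper simply asserts) and append a correct extra observation that $\varphi(I)=I$, but the core argument is identical.
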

\begin{proof} 
If $X\in\cA$, then $XS=SX$ implies that $\varphi(X)S=S\varphi(X)$ and so $\varphi(X)$ lies in the commutant of $S$, which is $\cA$.
\end{proof}

A norm preserving multiplicative map $\varphi\colon\cA\to M_n(\bbC)$ can in general be very pathological.  It does not even have to be homogeneous or skew-homogeneous: let $\cS=\{z\in\bbC:|z|=1\}$ denote the unit circle and let $\psi\colon \cS\to\cS$ be a group homomorphism (such maps can be, in case we do not demand continuity, very pathological).  Now define $\varphi_\psi\colon \cA\to\cA$ by $\varphi_\psi(0)=0$ and
$$\varphi_\psi\left(\sum_{i=r}^{n-1} a_i S^i\right) = \frac{|a_r|}{a_r}\psi\left(\frac{a_r}{|a_r|}\right)\sum_{i=r}^{n-1} a_i S^i,$$
where $a_r\not=0$.

Even if we assume that a multiplicative, norm preserving $\varphi\colon\cA\to\cA$ is $\mathbb{C}$-homogeneous we have the following non-continuous example: Fix $a\in \cS$ 
and define $\varphi$ by $\varphi(0)=0$ and
$$
\varphi\left(\sum_{i=r}^{n-1} \alpha_i S^i\right) = \sum_{i=r}^{n-1} a^{i-r}\alpha_i S^i
$$
where $\alpha_r\not=0$.

We now prove that if we additionally assume that $\varphi$ is continuous, then the number of choices become much smaller. We will prove that any continuous, norm preserving, multiplicative map $\varphi\colon\cA\to M_n(\bbC)$ is either a unitary similarity (i.e., $\varphi(T)=UTU^*$ for some fixed unitary $U$) or a complex-conjugate unitary similarity (i.e., $\varphi(T)=U\overline{T}U^*$ for some fixed unitary $U$).  

We will establish this claim by proving that any $\varphi\colon\cA\to \cA$ that is continuous, multiplicative, norm preserving, $\bbC$-homogeneous, and maps $S$ to $S$, must be the identity map.  The general claim then follows by invoking lemmas above together with the observation that a continuous, norm preserving map from $\cA$ to $\cA$ that maps $S$ to $S$ must either be homogeneous or skew-homogeneous.

\begin{lemma} Let $\varphi\colon\cA\to\cA$ be a continuous multiplicative map such that $\varphi(S)=S$ and for $\alpha\in\mathbb{C}$ we have $\varphi(\alpha I)=\alpha I$.  Then $\varphi$ is the identity map.
\end{lemma}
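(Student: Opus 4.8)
The plan is to prove, by induction on $k$ from $1$ to $n$, that the map $\bar\varphi_k\colon\cA/(S^k)\to\cA/(S^k)$ induced by $\varphi$ is the identity; since $S^n=0$, the case $k=n$ then says $\varphi=\mathrm{id}$. These induced maps are well defined because multiplicativity gives $\varphi(S^kB)=\varphi(S)^k\varphi(B)=S^k\varphi(B)\in(S^k)$ for all $B$, so $\varphi$ preserves each ideal $(S^k)=S^k\cA$; moreover each $\bar\varphi_k$ inherits continuity and multiplicativity, fixes the class of $S$, and, because $\varphi(\alpha A)=\varphi(\alpha I)\varphi(A)=\alpha\varphi(A)$ makes $\varphi$ (hence each $\bar\varphi_k$) $\mathbb C$-homogeneous of degree one, fixes every scalar class. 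The base case $k=1$ is immediate: $\cA/(S)\cong\mathbb C$, and $\bar\varphi_1$ sends the class $\alpha$ of $\alpha I$ to the class of $\varphi(\alpha I)=\alpha I$, namely $\alpha$.

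For the inductive step I assume $\bar\varphi_k=\mathrm{id}$ with $1\le k\le n-1$ and work in $R=\cA/(S^{k+1})\cong\mathbb C[t]/(t^{k+1})$, where $t$ is the class of $S$. Since the reduction $R\to\cA/(S^k)$ intertwines $\bar\varphi_{k+1}$ with $\bar\varphi_k=\mathrm{id}$, the difference $\bar\varphi_{k+1}(A)-A$ lies in the one-dimensional ideal $\mathbb C t^k$ for every $A\in R$, so $\bar\varphi_{k+1}(A)=A+h(A)t^k$ for a function $h\colon R\to\mathbb C$ that is continuous and $\mathbb C$-homogeneous of degree one. Expanding $\bar\varphi_{k+1}(AB)=\bar\varphi_{k+1}(A)\bar\varphi_{k+1}(B)$ and using $t^{2k}=0$ and $Bt^k=\tau(B)t^k$, where $\tau$ is the constant-term functional, yields the twisted-derivation identity $h(AB)=\tau(A)h(B)+\tau(B)h(A)$; and $\varphi(S)=S$ forces $h(t)=0$, so $h$ vanishes on the whole augmentation ideal $tR$. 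The goal is $h\equiv0$, and by homogeneity it is enough to show $h(I+N)=0$ for every $N\in tR$. Here is where commutativity of $\cA$ enters: $R$ is commutative, so $\exp$ is a group isomorphism from $(tR,+)$ onto the unipotent group $\{I+N:N\in tR\}$, and the twisted-derivation identity says $h$ restricted to that group is a continuous additive homomorphism; hence $h\circ\exp$ is $\mathbb R$-linear on $tR$, and therefore $\mu\mapsto h(I+\mu N)=(h\circ\exp)\bigl(\log(I+\mu N)\bigr)$ is an ordinary polynomial in the real variable $\mu$, because $\log(I+\mu N)$ is polynomial in $\mu$ by nilpotence. On the other hand, homogeneity gives $\mu^{-1}h(I+\mu N)=h(\mu^{-1}I+N)$, which tends to $h(N)=0$ as $\mu\to\infty$ by continuity of $h$ at $N$; so that polynomial is $o(\mu)$, hence constant, hence equal to its value $h(I)=0$ at $\mu=0$. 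Thus $h(I+N)=0$, the induction closes, and $\varphi=\bar\varphi_n=\mathrm{id}$.

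I expect the main obstacle to be exactly the final move in the inductive step---deducing $h\equiv0$ from ``$h$ is a continuous twisted derivation.'' Nothing algebraic forbids a nonzero such $h$: a continuous additive character of the unipotent group $\{I+N\}\cong(\mathbb C^{n-1},+)$ can be given by any $\mathbb R$-linear functional, and the multiplicative, homogeneous, but discontinuous examples displayed earlier in this section are precisely of that shape. What kills them is continuity of $\varphi$ at the frontier of the invertible matrices, i.e.\ at matrices with vanishing constant term; concretely, this is the input $\mu^{-1}h(I+\mu N)\to h(N)=0$ that forces the polynomial $h(I+\mu N)$ to be constant. The only calculations needing care in a full write-up are the verification that $\log(I+\mu N)$ is genuinely polynomial in $\mu$ and that $h\circ\exp$ is $\mathbb R$-linear (so that $h(I+\mu N)$ is an honest polynomial), after which ``an $o(\mu)$ polynomial is constant'' does the rest.
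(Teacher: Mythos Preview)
Your proof is correct and takes a genuinely different route from the paper's. Both arguments proceed by induction along the filtration of $\cA$ by powers of $S$, but in opposite directions: the paper works downward, defining $\psi(N)$ by $\varphi(I+N)=I+N+\psi(N)$ and showing that if $\psi$ vanishes on $\cA_r=S^r\cA$ then it vanishes on $\cA_{r-1}$; you work upward through the quotients $\cA/(S^k)$. The paper's inductive step is computational: it derives explicit recursive formulas for $\psi(2^mT)$ and then uses $\varphi(2^{-m}I+T)=2^{-m}\varphi(I+2^mT)$ together with continuity at $T$ to force first $T\psi(T)=0$ and then $\psi(T)=0$. Your inductive step is more structural: you recognise the error term $h$ as a continuous twisted derivation, hence a continuous additive character of the unipotent group $\{I+N\}$, pull it back via $\exp/\log$ to an $\mathbb R$-linear functional on the nilpotent ideal, and kill it with the polynomial-growth observation that $h(I+\mu N)=o(\mu)$ forces a real polynomial to be constant. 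Both proofs ultimately exploit the same phenomenon---homogeneity plus continuity of $\varphi$ at matrices with vanishing constant term---but yours packages it more cleanly and makes transparent why the discontinuous counterexamples displayed earlier in the section survive algebraically yet die under continuity.
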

\begin{proof} We start by noting that the condition that  $\varphi(\alpha I)=\alpha I$ for $\alpha\in\mathbb{C}$ is equivalent to $\mathbb{C}$-homogeneity of $\varphi$.  For $i\in\mathbb{N}$ define
$$\cA_i=\mathrm{Span}\{S^j : i\le j\} = \left\{\sum_{j=i}^{n-1} \alpha_j S^j: \alpha_j\in\mathbb{C}\right\}.$$  
We point out that 
$$\cA_i = \{X\in\cA: S^{n-i}X=0\}.$$
If $A=\sum_{i=r}^{n-1} \alpha_i S^i$ with $\alpha_r\not=0$, and $\varphi(A)=B=\sum_{j=0}^{n-1} \beta_j S^j=\varphi(A)$, then we have $\beta_j=0$ for $j<r$ and $\beta_r=\alpha_r$.  This follows from:
\begin{eqnarray*}
0 &=& \varphi(0) = \varphi(S^{n-r}A) = S^{n-r} B = \sum_{j<r} \beta_j S^{n-r+j}, \\
\alpha_r S^{n-1} &=& \varphi(\alpha_r S^{n-1}) = \varphi(S^{n-1-r}A) = S^{n-r-1}B = \beta_r S^{n-1}.
\end{eqnarray*}
Now define $\psi\colon\cA_1 \to\cA$ by 
$$\varphi(I+N)=I+N+\psi(N)$$ 
for $N\in\cA_1$.  We will prove that $\psi=0$.  We first observe that for $i\in\mathbb{N}$ we have $\psi(\cA_i)\subseteq \cA_i$.  Indeed, if $N\in\cA_i$, then 
$S^{n-i}N=0$, thus 
\begin{eqnarray*}
S^{n-i} &=& \varphi(S^{n-i}) = \varphi(S^{n-i}(I+N)) = S^{n-i}(I+N+\psi(N)) = S^{n-i}+S^{n-i}\psi(N),
\end{eqnarray*}
and hence $S^{n-i}\psi(N)=0$.  Now let $r$ be the smallest positive integer such that $\psi(\cA_r)=0$ (we use the convention that $\cA_{n}=0$, so such an $r$ is well defined).  We will prove that $r=1$.  Suppose, toward contradiction, that $r>1$.  Let $T=\alpha S^{r-1}$ for some $\alpha\in\bbC$
and let $N\in\cA_r$.  Then we have the following identities:
\begin{eqnarray}
\psi(T+N)&=& (I+(I+T)^{-1}N)\psi(T) \label{eq-T+N}\\
\psi(2T)&=& 2(I+T)(I+(I+2T)^{-1}T^2)^{-1}\psi(T) \label{eq-2T}
\end{eqnarray}
The identity (\ref{eq-T+N}) is proven as follows.  Let $M=(I+T)^{-1}N$.  
Because  $N\in\cA_r$ and $(I+T)^{-1}=\sum_{k\geq 0}(-1)^k\alpha^kS^{k(r-1)}$, we see that $M\in\cA_r$
and, therefore, that $\psi(M)=0$.
Thus,
\begin{eqnarray*}
I+T+N+\psi(T+N)&=&\varphi(I+T+N) = \varphi((I+T)(I+M))\\
&=&\varphi(I+T)\varphi(I+M) = (I+T+\psi(T))(I+M)\\
&=&I+T+(I+T)M+(I+M)\psi(T)\\
&=& I+T+N+(I+(I+T)^{-1}N)\psi(T).
\end{eqnarray*}

Replacing $T$ by $2T$ and $N=T^2$ in identity (\ref{eq-T+N}) we then get
\begin{eqnarray*}
\psi(2T+T^2) = (I+(I+2T)^{-1}T^2)\psi(2T).
\end{eqnarray*}
Identity (\ref{eq-2T}) is proven by the following computation:
\begin{eqnarray*}
I+2T+T^2+(I+(I+2T)^{-1}T^2)\psi(2T) &=& I+2T+T^2+\psi(2T+T^2) \\
&=& \varphi(I+2T+T^2) = \varphi((I+T)^2) \\
&=& \varphi(I+T)^2 = (I+T+\psi(T))^2\\
&=& I+2T+T^2+2(I+T)\psi(T).
\end{eqnarray*}
We now use induction to prove that for every $m\in\mathbb{N}$ there is a polynomial $p_m$ such that
\begin{eqnarray}
\psi(2^m T) = 2^m(I+(2^m-1)T+p_m(T)T^2)\psi(T), \label{eq-2^mT}
\end{eqnarray}
The base case $m=1$ follows trivially from equation (\ref{eq-2T}) with 
\begin{eqnarray*}
p_1(T)&=& (I+T)(I+2T)^{-1} = (I+T)\sum_{i=0}^{n-1} (-2T)^i. 
\end{eqnarray*}
Now assume that the equation (\ref{eq-2^mT}) holds for some $m$.  For $m+1$ we then have
\begin{eqnarray*}
\psi(2^{m+1}T) &=& \psi(2(2^mT)) = 2(I+(2^mT)+p_1(2^m T)(2^m T)^2)\psi(2^m T) \\
&=& 2(I+2^m T + 2^{2m}p_1(2^m T)T^2)2^m(I+(2^m-1)T+p_m(T)T^2)\psi(T) \\
&=& 2^{m+1}(I+(2^m+2^m-1)T+(\mbox{other terms})T^2)\psi(T).
\end{eqnarray*}
This finishes the induction step.  

Using (\ref{eq-2^mT}) we 
now compute
\begin{eqnarray*}
\varphi\left(\frac{1}{2^m}+T\right) &=& \frac{1}{2^m}\varphi(I+2^m T) \\
&=& \frac{1}{2^m}\left(I+2^mT+\psi(2^mT)\right) \\
&=& \frac{1}{2^m} I + T + \psi(T)+ \left((2^m-1)I+p_m(T)T\right)T\psi(T).
\end{eqnarray*}
Take limit as $m\to\infty$ to get
\begin{eqnarray}
\varphi(T) = T+\psi(T)+\lim_{m\to\infty} ((2^m-1)I+p_m(T)T)T\psi(T).
\end{eqnarray}
Note that 
\begin{eqnarray*}
\lim_{m\to\infty} ((2^m-1)I+p_m(T)T)T\psi(T)
\end{eqnarray*}
can only exist if $T\psi(T)=0$: if $T\psi(T)=\sum_{j=i}^{n-1}\alpha_j S^j$ with $\alpha_i\not=0$, then $((2^m-1)I+p_m(T)T)T\psi(T)=\sum_{j=i}^{n-1} \beta_jS^j$ with $\beta_i=(2^m-1)\alpha_i\stackrel{m\to\infty}{\longrightarrow} \infty$.  So we have
$$
\varphi(T)=T+\psi(T),
$$
and therefore $\psi(T)=0$.  Now if $A\in\cA_{r-1}$, then we have $A=T+N$ for some $T\in\mathbb{C}S^{r-1}$ and $N\in\cA_r$ and by (\ref{eq-T+N}) we then have
\begin{eqnarray*}
\psi(A) = \psi(T+N) = (I+(I+T)^{-1}N)\psi(T) = 0.
\end{eqnarray*}
Hence $\psi(\cA_{r-1})=0$, contradicting minimality of $r$.

We have now established that $\psi(\cA_1)=0$.  Hence for every $N\in\cA_1$ we have
$$
\varphi(I+N) = I+N.
$$
Therefore for all $\alpha\not=0$:
$$
\varphi(\alpha I+N) = \varphi\left(\alpha\left(I+\frac{1}{\alpha} N\right)\right) =
\alpha\left(I+\frac{1}{\alpha}N\right) = \alpha I + N.
$$ 
Taking the limit as $\alpha\to 0$ we note that the above result is also valid for $\alpha=0$.  As every element of $\cA$ is of the form $\alpha I+N$ for some $\alpha\in\mathbb{C}$ and $N\in\cA_1$ we conclude that $\varphi$ is the identity map.
\end{proof}

\begin{lemma}Let $n>1$ and let $\varphi\colon\cA\to \cA$
be a continuous, multiplicative, norm preserving map, such that $\varphi(S)=S$.  Then $\varphi$ is either homogeneous or skew-homogeneous.
\end{lemma}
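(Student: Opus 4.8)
The plan is to show that $\varphi$ is governed on the scalar matrices by a single continuous multiplicative function $g_0\colon\bbC\to\bbC$, and then to exploit continuity, together with norm preservation on the one–parameter group of matrices $I+\alpha S^{n-1}$, to force $g_0$ to be either the identity or complex conjugation. First I would record that $\varphi(I)=I$: indeed $\varphi(I)=\varphi(I)^2$ is an idempotent of the local ring $\cA$, hence $0$ or $I$, and $\varphi(I)S=\varphi(I)\varphi(S)=\varphi(S)=S\neq0$ rules out $\varphi(I)=0$. Next, since $\alpha I$ commutes with $S$, the matrix $\varphi(\alpha I)$ commutes with $\varphi(S)=S$ and so lies in $\cA$; writing $\varphi(\alpha I)=g_0(\alpha)I+\sum_{j=1}^{n-1}\xi_j(\alpha)S^j$, the character of $\cA$ reading off the diagonal entry shows $g_0$ is continuous, multiplicative and $g_0(1)=1$, hence nonvanishing away from $0$. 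Evaluating $\varphi(\alpha I)$ at $e_1$ gives $|g_0(\alpha)|\le\|\varphi(\alpha I)\|=|\alpha|$; replacing $\alpha$ by $\alpha^{-1}$ and multiplying gives $|g_0(\alpha)|=|\alpha|$; then evaluating $\varphi(\alpha I)^{*}$ at $e_1$ (whose image has the pairwise orthogonal components $\overline{g_0(\alpha)}e_1,\overline{\xi_1(\alpha)}e_2,\dots$) forces $\xi_j\equiv0$. Thus $\varphi(\alpha I)=g_0(\alpha)I$ with $|g_0(\alpha)|=|\alpha|$; since $\varphi(\alpha X)=\varphi(\alpha I)\varphi(X)=g_0(\alpha)\varphi(X)$, the lemma is equivalent to showing $g_0=\mathrm{id}_{\bbC}$ or $g_0(\alpha)=\overline\alpha$.

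The next step is to analyze $\varphi$ on $\{I+\alpha S^{n-1}:\alpha\in\bbC\}$. Because $n>1$ we have $S^{2(n-1)}=0$, so this set is a group isomorphic to $(\bbC,+)$, and $(I+\alpha S^{n-1})S=S$ forces $\varphi(I+\alpha S^{n-1})=I+c(\alpha)S^{n-1}$ for a scalar $c(\alpha)$. Multiplicativity gives $c(\alpha+\beta)=c(\alpha)+c(\beta)$, and $c$ is continuous, hence $\bbR$-linear. Moreover $I+\alpha S^{n-1}=I+\alpha e_1e_n^{*}$ decomposes as an orthogonal direct sum of $I_{n-2}$ with the $2\times2$ block $\bigl[\begin{smallmatrix}1&\alpha\\0&1\end{smallmatrix}\bigr]$, whose norm $\tfrac12(|\alpha|+\sqrt{|\alpha|^2+4})$ depends only on $|\alpha|$ and is strictly increasing in $|\alpha|$; norm preservation then gives $|c(\alpha)|=|\alpha|$. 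An $\bbR$-linear isometry of $\bbC$ has the form $\zeta\mapsto e^{i\gamma}\zeta$ or $\zeta\mapsto e^{i\gamma}\overline\zeta$ for some fixed real $\gamma$.

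Finally, for $\beta\neq0$ write $\beta I+S^{n-1}=(\beta I)(I+\beta^{-1}S^{n-1})$, so that $\varphi(\beta I+S^{n-1})=g_0(\beta)I+g_0(\beta)c(\beta^{-1})S^{n-1}$. Letting $\beta\to0$ and invoking continuity of $\varphi$ at $\varphi(S^{n-1})=S^{n-1}$, the $S^{n-1}$-coefficient $g_0(\beta)c(\beta^{-1})$ must tend to $1$. If $c(\zeta)=e^{i\gamma}\zeta$, this reads $e^{i\gamma}u(\beta)\to1$ as $\beta\to0$, where $u(\beta):=g_0(\beta)/\beta$ is a continuous homomorphism $\bbC^{\times}\to\bbT$; squaring the relation $u(\beta^2)=u(\beta)^2$ shows $\lim_{\beta\to0}u(\beta)=1$, and then for each fixed $\zeta$, letting $\beta\to0$ in $u(\zeta\beta)=u(\zeta)u(\beta)$ yields $u(\zeta)=1$. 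Hence $g_0=\mathrm{id}_{\bbC}$ and $\varphi$ is homogeneous. If instead $c(\zeta)=e^{i\gamma}\overline\zeta$, the same reasoning applied to $v(\beta):=g_0(\beta)/\overline\beta$ gives $g_0(\beta)=\overline\beta$, so $\varphi$ is skew-homogeneous.

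I expect the last paragraph to be the crux. There is a large supply of continuous multiplicative functions $g_0$ with $|g_0(\alpha)|=|\alpha|$, and the explicit $\bbC$-homogeneous but discontinuous map displayed just before the lemma shows that such ``twists'' really do arise when continuity is dropped; the hypothesis of continuity is used precisely in the limit $\beta I+S^{n-1}\to S^{n-1}$, that is, in controlling $\varphi$ as the leading (here lowest-degree) term of a matrix degenerates. The supporting points that also need care are the vanishing of the nilpotent corrections $\xi_j$ in $\varphi(\alpha I)$, and the exact monotone dependence of $\|I+\alpha S^{n-1}\|$ on $|\alpha|$, which is what upgrades $c$ from a contraction to a genuine isometry.
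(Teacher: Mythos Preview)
Your proof is correct and follows essentially the same route as the paper's: reduce to showing $\varphi(\alpha I)=g_0(\alpha)I$ for a continuous multiplicative $g_0$ with $|g_0(\alpha)|=|\alpha|$, analyze $\varphi$ on the one-parameter group $I+\alpha S^{n-1}$ to obtain an additive norm-preserving $c$ (hence $\bbR$-linear of the form $e^{i\gamma}\zeta$ or $e^{i\gamma}\overline\zeta$), and then let the scalar part degenerate in $\beta I+S^{n-1}\to S^{n-1}$ to force $g_0=\mathrm{id}$ or $g_0=\overline{\,\cdot\,}$. The only cosmetic differences are the order in which $g_0$ is introduced (the paper defines it via $\varphi(\alpha S^{n-1})$ first) and the endgame: the paper writes $\xi(\beta)=\lambda\beta\lim_{\alpha\to 0}\xi(\alpha)/\alpha$ and then uses $\xi(1)=1$, whereas you pass through the homomorphism $u(\beta)=g_0(\beta)/\beta$ and its limit at $0$---both arguments are equivalent.
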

\begin{proof}
Let $\xi\colon\mathbb{C}\to\mathbb{C}$ denote the map given by $\varphi(\alpha S^{n-1})=\xi(\alpha) S^{n-1}$.  The map $\xi$ is well-defined (i.e., $\varphi(\alpha S^{n-1})$ is of required form) as $\varphi(S(\alpha S^{n-1}))=0=S\varphi(\alpha S^{n-1})$.  Due to norm preservation of $\varphi$ we have that $|\xi(\alpha)|=|\alpha|$.  Now let $\varphi(\alpha I) = \sum_{i=0}^{n-1} a_i S^{i}$.  Note that $a_0 S^{n-1} = S^{n-1}\sum_{i=0}^{n-1} a_i S^i=\varphi(S^{n-1}(\alpha I))=\xi(\alpha)S^{n-1}$.  So $a_0=\xi(\alpha)$.  Since $|a_0|=|\alpha|$ we conclude, by norm comparison, that $a_1=\ldots=a_{n-1}=0$.  Hence $\varphi(\alpha I)=\xi(\alpha)I$.

Let $\eta\colon \mathbb{C}\times\mathbb{C}\to\mathbb{C}$ denote the map given by $\varphi(\alpha I+\beta S^{n-1}) = \xi(\alpha)I+\eta(\alpha,\beta)S^{n-1}$; the fact that $\varphi(\alpha I+\beta S^{n-1})$ must be of this form is observed by considering 
$\xi(\alpha)S^i=\varphi(\alpha S^i)=\varphi(S^i(\alpha I+\beta S^{n-1}))=S^i\varphi(\alpha I+\beta S^{n-1}),$ for $i=1,\ldots ,n-1$.  Recall from \cite{FGS11} that
\begin{eqnarray*}
||\alpha I + \beta S^{n-1}||^2=\left|\left| \begin{pmatrix} \alpha & \beta \\ 0 & \alpha\end{pmatrix}\oplus \alpha I_{n-2}\right|\right|^2 =\left|\left| \begin{pmatrix} \alpha & \beta \\ 0 & \alpha\end{pmatrix}\right|\right|^2 = \frac{2|\alpha|^2+|\beta|^2+|\beta|\sqrt{4|\alpha|^2+|\beta|^2}}{2}.
\end{eqnarray*}  
Using $|\xi(\alpha)|=|\alpha|$ we conclude from the above that $|\eta(\alpha,\beta)|=|\beta|$.  The facts that for $\alpha,\alpha',\beta,\beta'\in\mathbb{C}$ we have
\begin{eqnarray*}
\varphi(\alpha'(\alpha I+\beta S^{n-1}))&=&\xi(\alpha')\varphi(\alpha I+\beta S^{n-1}),\\
\varphi(\beta S^{n-1})&=&\xi(\beta)S^{n-1},\mbox{ and}\\ 
\varphi((I+\beta S^{n-1})(I+\beta' S^{n-1}))&=& \varphi((I+\beta S^{n-1}))\varphi((I+\beta' S^{n-1}))
\end{eqnarray*} yield the identities
\begin{eqnarray}
\eta(\alpha,\alpha\beta) &=& \xi(\alpha)\eta(1,\beta), \\
\eta(0,\beta) &=& \xi(\beta),\\
\eta(1,\beta+\beta')&=&\eta(1,\beta)+\eta(1,\beta'),\label{eq8}
\end{eqnarray}
for all $\alpha,\alpha',\beta,\beta'\in\mathbb{C}$.
The equation (\ref{eq8}) tells us that 
$$\zeta:=\eta(1,-)\colon\mathbb{C}\to\mathbb{C}$$ is a continuous, additive, norm preserving map.  It is well known (and easy to deduce) that any such map is of the form $\zeta(z)=\lambda z$ or $\zeta(z)=\lambda \overline{z}$ for some fixed $\lambda\in\cS$.  We assume the former and will prove that this implies that $\varphi$ is homogeneous (a very similar consideration, which we leave to the reader, shows that the latter implies that $\varphi$ is skew homogeneous): Fix $\beta\not=0$ and compute
\begin{eqnarray*}
\xi(\beta)&=&\eta(0,\beta)=\lim_{\alpha\to 0} \eta(\alpha,\beta)\\ 
&=& \lim_{\alpha\to 0} \xi(\alpha)\eta(1,\beta/\alpha) = \lim_{\alpha\to 0} \frac{\xi(\alpha)}{\alpha} \lambda\beta \\
&=& \lambda\beta\lim_{\alpha\to 0} \frac{\xi(\alpha)}{\alpha}.
\end{eqnarray*}
As $\xi(1)=1$ we have that 
\begin{eqnarray*}
1= \lambda\lim_{\alpha\to 0} \frac{\xi(\alpha)}{\alpha},
\end{eqnarray*}
and therefore $\xi(\beta)=\beta$.
\end{proof}

Combining the results above leads directly to the following theorem, which is the main result of this section.

\begin{theorem}\label{multiplicative-structure}
Let $n>1$ and let
$$\varphi\colon\cA\to M_n(\mathbb{C})$$
be a continuous, multiplicative, norm preserving map.  Then there exists a unitary $U$ such that
either for all $A\in\cA$ we have
$$
\varphi(A)=UAU^*, \mbox{ or }
$$
for all $A\in\cA$ we have
$$
\varphi(A)=U\overline{A}U^{*}.
$$
\end{theorem}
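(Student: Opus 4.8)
The plan is to reduce the statement to the normalized situations already handled by the lemmas of this section, all of which concern maps sending $S$ to $S$. First, from multiplicativity and norm preservation, $\varphi(0)=\varphi(0)^2$ and $\|\varphi(0)\|=\|0\|=0$, so $\varphi(0)=0$; hence $\varphi(S)^n=\varphi(S^n)=0$, so $\varphi(S)$ is nilpotent, while $\|\varphi(S)\|=\|S\|=1$ and $\|\varphi(S)^{n-1}\|=\|\varphi(S^{n-1})\|=\|S^{n-1}\|=1$. The first lemma of this section then produces a unitary $W\in M_n(\bbC)$ with $\varphi(S)=WSW^*$. Replacing $\varphi$ by $\psi:=W^*\varphi(\,\cdot\,)W$, which is again continuous, multiplicative, and norm preserving, I may assume $\varphi(S)=S$; it then suffices to show that $\psi$ is either the identity map or the map $A\mapsto\overline A$, for then $\varphi(A)=WAW^*$ or $\varphi(A)=W\overline AW^*$ respectively.

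Since a multiplicative map is in particular a homomorphism of the multiplicative semigroup, the second lemma gives $\psi(\cA)\subseteq\cA$; thus $\psi$ is a continuous, multiplicative, norm preserving self-map of $\cA$ fixing $S$, and the fourth lemma (which needs $n>1$, as assumed) tells us $\psi$ is either homogeneous or skew-homogeneous. I would then split into these two cases.

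If $\psi$ is homogeneous, I first identify $\psi(I)$: from $\psi(I)\psi(S)=\psi(S)=S$ together with $\psi(I)\in\cA$ one gets $\psi(I)=I+cS^{n-1}$ for some scalar $c$, and then idempotency $\psi(I)=\psi(I)^2$ forces $c=0$, so $\psi(I)=I$ and hence $\psi(\alpha I)=\alpha\psi(I)=\alpha I$ for all $\alpha\in\bbC$. The third lemma now applies verbatim to $\psi$ and gives $\psi=\mathrm{id}$, which is the first alternative with $U=W$. If instead $\psi$ is skew-homogeneous, I exploit that, because the entries of $S$ are real, entrywise complex conjugation $A\mapsto\overline A$ is a norm preserving algebra automorphism of $\cA$ fixing $S$. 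Consequently $\psi'\colon A\mapsto\psi(\overline A)$ is continuous, multiplicative, norm preserving, fixes $S$, and is \emph{homogeneous}, since $\psi'(\alpha A)=\psi(\overline\alpha\,\overline A)=\overline{\overline\alpha}\,\psi(\overline A)=\alpha\,\psi'(A)$. By the homogeneous case just settled, $\psi'=\mathrm{id}$, i.e. $\psi(\overline A)=A$ for all $A\in\cA$, and replacing $A$ by $\overline A$ gives $\psi(A)=\overline A$; this is the second alternative, again with $U=W$.

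I do not expect a genuine obstacle at the level of the theorem itself: all the substantive work lives in the lemmas, especially the delicate limiting argument in the third lemma, which is the engine driving everything. The only things demanding care here are the bookkeeping verifications that conjugation by a unitary and entrywise complex conjugation preserve multiplicativity, continuity, the operator norm, and the value at $S$. The one verification a reader might pause on is that $A\mapsto\overline A$ is norm preserving, which holds because $\overline A$ has the same singular values as $A$: indeed $(\overline A)^{*}\overline A=\overline{A^{*}A}$, so these two positive semidefinite matrices are complex conjugates of one another and hence have equal spectra.
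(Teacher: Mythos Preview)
Your proposal is correct and follows exactly the route the paper intends: the paper's proof of the theorem is literally the sentence ``Combining the results above leads directly to the following theorem,'' and you have correctly spelled out that combination---using the first lemma to normalize $\varphi(S)$ to $S$ via a unitary conjugation, the second lemma to land in $\cA$, the fourth lemma to split into the homogeneous and skew-homogeneous cases, and the third lemma to pin down the homogeneous case as the identity. Your reduction of the skew-homogeneous case to the homogeneous one by precomposing with entrywise conjugation is clean and is precisely the kind of symmetry argument the paper gestures at; your verification that $\psi(I)=I$ via $\psi(I)S=S$ plus idempotency is a minor variant of the paper's remark (in the proof of the third lemma) that $\varphi(\alpha I)=\alpha I$ is equivalent to $\bbC$-homogeneity under the standing hypotheses.
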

\qed


\section{Singular Value Preservation}\label{singular-values-section}

In this section we start studying linear isometries on the algebra of Toeplitz matrices.


\begin{theorem}\label{singular-values-to-change}
If $\varphi:\cA\to M_n(\bbC)$ is a linear isometry, then $\varphi$ preserves singular values.
\end{theorem}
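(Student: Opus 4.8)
The plan is to reduce the statement to an eigenvalue identity and then extract the singular values one rank at a time from the norm data the hypothesis supplies. Since $\varphi$ is linear and isometric, it preserves the operator norm $\norm{X}$ of every $X\in\cA$, which is exactly the largest singular value $s_1(X)$; by homogeneity of $\varphi$ and continuity of the singular values in $X$ it is enough to treat, say, the invertible elements of $\cA$ (those with nonzero constant term), which are dense. So the whole task is to recover $s_2(A)\ge\cdots\ge s_n(A)$, equivalently to show that $A^*A$ and $\varphi(A)^*\varphi(A)$ have the same characteristic polynomial, using only the knowledge that $\norm{\varphi(A+X)}=\norm{A+X}$ for every $X\in\cA$.

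First I would try to reconstruct the lower singular values from the norm by variational formulas: $s_{k+1}(A)$ is the operator-norm distance from $A$ to the rank-$\le k$ matrices, and the Ky Fan $k$-norm $s_1+\cdots+s_k$ is a supremum of trace pairings against partial isometries of rank $\le k$. Complementary sources of invariants are the extreme-point and facial structure of the unit ball of $\cA$ (a linear isometry carries faces to faces and extreme points to extreme points) and the fact that $\abs{\det\varphi(A)}$ ought to equal $\abs{\det A}=\abs{a_0}^n$ for $A=\sum_i a_iS^i$, which already settles the case $n=2$ once combined with $\norm{\varphi(A)}=\norm A$. For each of these one must reintroduce the algebra structure: $\cA$ is the commutant of $S$, the truncation subspaces $\cA_k=S^k\cA=\{X\in\cA:S^{n-k}X=0\}$ are intrinsically characterised (as in Section~\ref{multiplicative-section}), $\bbC S^{n-1}$ is the unique line of rank-one elements, and for $X\in\cA$ the value of $\rk X$ is governed by the largest $k$ with $X\in\cA_k$. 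These rigidity features are encoded in the norm, so a linear isometry must respect them; I would use that to pin $\varphi$ down along the chain $\cA\supset\cA_1\supset\cdots$ and propagate singular-value information down it.

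The principal obstacle, common to all these routes, is that the natural witnesses needed to separate $s_2,\dots,s_n$ — the optimal low-rank perturbations, the dual partial isometries in the Ky Fan formulas, the matrices realising $\det$ — generically lie in $M_n(\bbC)\setminus\cA$, so the isometry hypothesis says nothing about them directly; one must instead manufacture enough witnesses inside $\cA$ itself, and this is precisely where the special structure of upper-triangular Toeplitz matrices (every element being a polynomial $f(S)$) has to do real work. I expect the hardest part to be this transfer of the singular-value bookkeeping into the subspace $\cA$, in particular controlling what happens along the truncation subspaces $\cA_k$.
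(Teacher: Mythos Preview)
Your proposal is not a proof but a survey of possible strategies, and you correctly identify the central obstacle in each of them without resolving it. The variational characterisations of $s_{k+1}(A)$ and of the Ky Fan norms require optimising over low-rank perturbations or rank-$k$ partial isometries, and as you say, the optimisers generically lie outside $\cA$; nothing in your outline explains how to manufacture adequate substitutes inside $\cA$. The claim that $\abs{\det\varphi(A)}=\abs{\det A}$ is asserted without argument (``ought to''), and I do not see how to extract it directly from the isometry hypothesis; once you had it, together with $\norm{\varphi(A)}=\norm{A}$, you would indeed be done for $n=2$, but the claim itself is essentially as hard as the theorem. The facial/extreme-point structure of the unit ball of $\cA$ and the chain $\cA\supset\cA_1\supset\cdots$ are genuine invariants of a linear isometry, but you give no mechanism by which they determine the remaining singular values of a generic $A\in\cA$. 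In short, every route you list runs into the same wall, you name the wall, and you stop there.

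The paper's proof bypasses all of this with a global algebraic argument in the spirit of Morita's proof of Schur's theorem. One views the characteristic polynomials $\phi(\lambda,A)=\det(\lambda I-A^*A)$ and $\psi(\lambda,A)=\det(\lambda I-\varphi(A)^*\varphi(A))$ as elements of $\bbC[x_1,\dots,x_n,y_1,\dots,y_n][\lambda]$, where $x_k,y_k$ are the real and imaginary parts of the Toeplitz coefficients. For each fixed $A$ they share the root $\norm{A}^2$, so their resultant in $\lambda$ vanishes identically on $\cA$ and hence is the zero polynomial; thus $\phi$ and $\psi$ share a nonconstant common factor over $\bbC[x,y]$. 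The substantive step is then to show that $\phi$ is \emph{irreducible} in $\bbC[x,y][\lambda]$, which is done by a direct analysis of the coefficients $r_k(x)$ (sums of principal minors of $A^*A$) and an induction on the factorisation. Irreducibility forces $\phi=\psi$, i.e.\ identical singular values. The point is that no analytic witnesses are needed: the single root shared for every $A$ is promoted to equality of the full polynomials by the algebraic rigidity of $\phi$.
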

\begin{proof}
Our general approach will follow Morita's proof~\cite{Mor41} (see also~\cite[Theorem~10.2.2]{FM09}) of Schur's theorem~\cite{Sch25}.

For $A\in\cA$, consider the functions
$$
\phi(\lambda, A)=\det(\lambda I-A^*A)
$$
and
$$
\psi(\lambda,A)=\phi(\lambda,\varphi(A)).
$$
Each $A\in\cA$ can be written as $A=\sum_{k=1}^{n}x_kS^{k-1}+i\sum_{k=1}^{n}y_kS^{k-1}$ with $x_k,y_k\in\bbR$. Then both $\phi(\lambda,A)$ and $\psi(\lambda,A)$ can be thought of as polynomials (in $\lambda$) with coefficients in the ring $\bbC[x_1,\dots,x_n,y_1,\dots,y_n]$. Let us denote this ring by the shorter symbol $\bbC[x,y]$.

Notice that if $A\in\cA$ is fixed, then $\phi(\lambda,A)$ and $\psi(\lambda,A)$ considered as polynomials in $\bbC[\lambda]$ share a common root (the square of the norm of~$A$; this is because $\varphi$ is an isometry). We claim that, in fact, $\phi(\lambda,A)$ and $\psi(\lambda,A)$ considered as polynomials in $\bbC[x,y][\lambda]$, share a common factor of positive degree.

To that end, consider the \emph{resultant} $\Res(\phi,\psi)(\lambda)\in\bbC[x,y][\lambda]$ (see the books~\cite{Lang} or \cite{vdWae} for more information about the resultant. All we need to know is that if $f$ and $g$ are two polynomials with coefficients in an integral domain, then $\Res(f,g)$ is a new polynomial, constructed from $f$ and $g$ by a specific formula, with the property $\Res(f,g)=0$ if and only if $f$ and $g$ share a common factor of positive degree). If, again, $A\in\cA$ is fixed, then $\Res(\phi,\psi)$ becomes a polynomial in~$\bbC[\lambda]$. This polynomial must be the zero polynomial since, as mentioned above, $\phi$ and $\psi$ share a common root when $A\in\cA$ is fixed. Since this happens for all $A\in\cA$, it follows that $\Res(\phi,\psi)$ is, in fact, a zero polynomial when considered an element of $\bbC[x,y][\lambda]$.

We established that $\phi$ and $\psi$ have a common factor of positive degree. To finish the proof, it is enough to show that $\phi$ is, in fact, irreducible in $\bbC[x,y][\lambda]$ as that will mean that $\phi$ and $\psi$ coincide. 

Let us write 
\begin{equation}\label{eq:1}
\phi(\lambda)=f(\lambda)g(\lambda),
\end{equation}
where we treat $\phi(\lambda)$ as an element of $\bbC[x,y](\lambda)$, and assume that $f$ and $g$ both have positive degrees. For simplicity of notations, let us only consider matrices $A$ with imaginary part zero. If 
$$
A=\sum_{k=i}^nx_kS^{k-1},
$$
then
$$
A^*A=\begin{bmatrix}
x_1^2 & x_1x_2 & x_1x_3 & \dots & x_1x_n\\
x_1x_2 & x_1^2+x_2^2 & x_1x_2+x_2x_3 & \dots & x_1x_{n-1}+x_2x_n\\
x_1x_3 & x_1x_2+x_2x_3 & x_1^2+x_2^2+x_3^2 & \dots & x_1x_{n-2}+x_2x_{n-1}+x_3x_n\\
\vdots & \vdots & \vdots & \ddots & \vdots\\
x_1x_n & x_1x_{n-1}+x_2x_n & x_1x_{n-2}+x_2x_{n-1}+x_3x_n & \dots & x_1^2+x_2^2+\dots+x_n^2\\
\end{bmatrix}
$$
Write 
$$
f(\lambda)=p_0(x)+p_1(x)\lambda+\dots+p_{n-1}(x)\lambda^{n-1}+p_n(x)\lambda^n,
$$
$$
g(\lambda)=q_0(x)+q_1(x)\lambda+\dots+q_{n-1}(x)\lambda^{n-1}+q_n(x)\lambda^{n}
$$
and
$$
\phi(\lambda)=r_0(x)+r_1(x)\lambda+\dots+r_{n-1}(x)\lambda^{n-1}+r_n(x)\lambda^{n},
$$
where $r_n(x)=1$ (some of the coefficients in the above polynomials could, of course, be equal to zero). Recall that for each $k=0,\dots,n-1$, the coefficient $r_k(x)$ of $\phi(\lambda)=\det(\lambda I-A^*A)$ is equal to $(-1)^{n-k}$ times the sum of principal $(n-k)\times (n-k)$ minors of the matrix~$A^*A$. 

\emph{Claim 1}. For each $k\in\{0,1,\dots,n\}$, the polynomial $r_k(x)$ contains monomials $x_1^{2(n-k)},x_2^{2(n-k)},\dots,x_{k+1}^{2(n-k)}$ with non-zero coefficients. Conversely, if $r_k(x)$ has a monomial of the form $x_i^j$ with a non-zero coefficient, then $1\le i\le k+1$ and $j=2(n-k)$.

To see this, fix $1\le i_0\le n$ and let $x_i=0$ for $i\ne i_0$. This will make $A^*A$ into a diagonal matrix whose first $i_0-1$ diagonal entries are zero and the remaining diagonal entries are equal to~$x_{i_0}^2$. Then the principal minors are easy to calculate, and the Claim follows.

Now, write 
\begin{equation}\label{eq:2}
r_k(x)=p_0(x)q_k(x)+p_1(x)q_{k-1}(x)+\dots+p_{k-1}(x)q_1(x)+p_k(x)q_0(x).
\end{equation}
In particular, 
$$
r_0(x)=p_0(x)q_0(x)
$$
and
$$
r_1(x)=p_0(x)q_1(x)+p_1(x)q_0(x).
$$
From Claim~1, $r_1(x)$ contains $x_2^{2(n-1)}$ with a non-zero coefficient. Notice that $r_0(x)=(-1)^nx_1^{2n}$. It follows that either $p_0(x)$ or $q_0(x)$ must be a scalar. We may assume that $q_0(x)=1$. With this assumption, we have the following.

\emph{Claim~2}. For each $k\in\{0,1,\dots,n\}$, the polynomial $p_k(x)$ contains monomials $x_1^{2(n-k)},x_2^{2(n-k)},\dots,x_{k+1}^{2(n-k)}$ with non-zero coefficients. If $p_k(x)$ has a monomial of the form $x_i^j$ with a non-zero coefficient, then $1\le i\le k+1$ and $j\ge 2(n-k)$.

We prove Claim~2 by induction. The statement is true when $k=0$ since $p_0(x)=r_0(x)=(-1)^nx_1^{2n}$. Suppose it is valid for~$k$, we need to establish it for~$k+1$. Since $q_0(x)=1$, we get from~\eqref{eq:2} that
$$
p_k(x)=r_k(x)-p_0(x)q_k(x)-p_1(x)q_{k-1}(x)-\dots-p_{k-1}(x)q_1{x}.
$$
By Claim~1, $r_k(x)$ contains the monomials $x_1^{2(n-k)}, x_2^{2(n-k)},\dots,x_{k+1}^{2(n-k)}$ and does not contain any other monomials of the form $x_i^j$ with non-zero coefficients. By the induction hypothesis, if $x_i^j$ is a monomial in~$p_m(x)$ with a non-zero coefficient, where $m<k$, then $1\le i\le k$ and $j>2(n-k)$. In particular, $p_m(x)$ does not have a free coefficient. It follows that $x_1^{2(n-k)},x_2^{2(n-k)},\dots,x_{k+1}^{2(n-k)}$ will not be canceled, and these monomials are exactly the monomials of the form $x_i^j$ of the smallest degree in~$p_k(x)$. This proves Claim~2.

It follows from Claim~2 that $p_n(x)\ne 0$ and therefore $g(x)$ must have zero degree, a contradiction.
\end{proof}


The following is an immediate corollary of Theorem~\ref{singular-values-to-change}.
\begin{corollary}\label{local-decomposition}
If $\varphi:\cA\to M_n(\bbC)$ is a linear isometry, then for each $A\in\cA$ there exist two unitaries $U_A$ and $V_A$ such that $\varphi(A)=U_AAV_A$.
\end{corollary}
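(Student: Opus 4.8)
The plan is to deduce the statement directly from Theorem~\ref{singular-values-to-change} together with the singular value decomposition. Recall that every matrix $X\in M_n(\bbC)$ admits a factorization $X=W_1DW_2^*$, where $W_1,W_2$ are unitary and $D=\diag\bigl(s_1(X),\dots,s_n(X)\bigr)$ is the diagonal matrix of singular values of $X$ arranged in non-increasing order. The elementary fact I would use is the following: two matrices $X,Y\in M_n(\bbC)$ have the same singular values (counted with multiplicity) if and only if $Y=UXV$ for some unitaries $U,V$. Indeed, writing $X=W_1DW_2^*$ and $Y=Z_1DZ_2^*$ with the \emph{same} diagonal matrix $D$, one has $Y=(Z_1W_1^*)\,X\,(W_2Z_2^*)$; and conversely, multiplying $X$ by unitaries on either side does not alter the spectrum of $X^*X$, hence leaves the singular values fixed.

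To apply this, fix $A\in\cA$. By Theorem~\ref{singular-values-to-change}, $\varphi$ preserves singular values, so $\varphi(A)$ and $A$ have the same list of singular values with multiplicities. Taking $X=A$ and $Y=\varphi(A)$ in the fact above yields unitaries $U_A,V_A\in M_n(\bbC)$ with $\varphi(A)=U_AAV_A$, which is exactly the assertion of the corollary.

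There is essentially no obstacle at this stage: all of the substantive work is already done in Theorem~\ref{singular-values-to-change}, whose proof establishes $\det(\lambda I-\varphi(A)^*\varphi(A))=\det(\lambda I-A^*A)$ as polynomials, i.e.\ equality of the full characteristic polynomial of $A^*A$, which is precisely preservation of the singular values with multiplicities. The only point worth flagging is that the unitaries $U_A$ and $V_A$ produced here depend on the choice of $A$; removing this dependence — showing that a single pair $(U,V)$ works simultaneously for all $A\in\cA$ — is the content of the later sections and is genuinely the hard part of the paper.
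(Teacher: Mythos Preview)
Your proof is correct and matches the paper's approach: the paper simply states that the corollary follows immediately from Theorem~\ref{singular-values-to-change}, and your write-up spells out precisely the obvious reason, namely that equality of singular values (with multiplicities) is equivalent, via the singular value decomposition, to the relation $\varphi(A)=U_AAV_A$ for some unitaries $U_A,V_A$.
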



We remind the reader that the symbol $S$ was reserved to denote the nilpotent $n\times n$ Jordan block (see formula~\eqref{eq:S}).

\begin{corollary}\label{additional-observations}
If $\varphi:\cA\to M_n(\bbC)$ is a linear isometry,
then $\varphi(S^k)$ is a partial isometry for each~$k\ge 0$. By composing $\varphi$ with a multiplication by a unitary, we may arrange that $\varphi(I)=I$. 
\end{corollary}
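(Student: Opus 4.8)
The plan is to read off both assertions directly from Theorem~\ref{singular-values-to-change}, using the elementary fact that a matrix $X\in M_n(\bbC)$ is a partial isometry if and only if every singular value of $X$ equals $0$ or $1$ (equivalently, $X^*X$ is an orthogonal projection, since the eigenvalues of $X^*X$ are the squares of the singular values of $X$).

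First I would record the singular values of the powers of $S$. For $0\le k\le n-1$ the matrix $S^k$ sends $e_j\mapsto e_{j-k}$ for $j>k$ and annihilates $e_1,\dots,e_k$, so $(S^k)^*S^k$ is the diagonal orthogonal projection onto $\operatorname{span}\{e_{k+1},\dots,e_n\}$; hence the singular values of $S^k$ are $1$ (with multiplicity $n-k$) and $0$ (with multiplicity $k$). For $k\ge n$ we simply have $S^k=0$. In every case each singular value of $S^k$ lies in $\{0,1\}$, so by Theorem~\ref{singular-values-to-change} each singular value of $\varphi(S^k)$ also lies in $\{0,1\}$, and therefore $\varphi(S^k)$ is a partial isometry. (Alternatively, one may invoke Corollary~\ref{local-decomposition}: $\varphi(S^k)=U_{S^k}S^kV_{S^k}$ with $U_{S^k},V_{S^k}$ unitary, and a unitary pre- and post-multiple of a partial isometry is again a partial isometry.)

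For the normalization I would apply the same reasoning with $k=0$: since $I$ has all singular values equal to $1$, Theorem~\ref{singular-values-to-change} forces $\varphi(I)$ to have all singular values equal to $1$, i.e.\ $\varphi(I)$ is a unitary matrix, say $W$. Replace $\varphi$ by the map $\psi$ defined by $\psi(A)=W^*\varphi(A)$ for $A\in\cA$. Left multiplication by the unitary $W^*$ is a linear isometry of $M_n(\bbC)$ in the operator norm (indeed it preserves all singular values), so $\psi$ is again a linear isometry $\cA\to M_n(\bbC)$, and $\psi(I)=W^*\varphi(I)=W^*W=I$. Thus, after composing $\varphi$ with this unitary multiplication, we may assume $\varphi(I)=I$.

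I do not anticipate a genuine obstacle here: both statements are immediate consequences of singular value preservation. The only points needing a line of care are the explicit computation of the singular values of $S^k$ (a routine check on the standard basis) and the observation that composing with left multiplication by a fixed unitary preserves both linearity and the isometry property on all of $\cA$.
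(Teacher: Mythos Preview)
Your proof is correct and follows essentially the same route as the paper: both deduce the result directly from the singular-value preservation of Theorem~\ref{singular-values-to-change} (or equivalently from Corollary~\ref{local-decomposition}, which you also mention). Your version simply spells out the details that the paper's one-line proof leaves implicit.
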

\begin{proof}
This follows immediately from Corollary~\ref{local-decomposition}.
\end{proof}


\begin{corollary}\label{nilpotence-preservation}
If $\varphi:\cA\to M_n(\bbC)$ is a unital linear isometry, then $\varphi$ preserves the rank and the spectrum and maps nilpotent matrices to nilpotent matrices. 
\end{corollary}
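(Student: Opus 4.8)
The plan is to read off all three properties from the singular-value preservation of Theorem~\ref{singular-values-to-change}, using in an essential way that both $\cA$ and $\varphi$ are unital. Rank preservation is the easiest: for every $A\in\cA$ the matrix $\varphi(A)$ has the same singular values as $A$, hence the same number of nonzero singular values, and that number is exactly the rank.

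For the spectrum I would argue as follows. Fix $A\in\cA$ and $\lambda\in\bbC$. Since $I\in\cA$, the matrix $\lambda I-A$ again belongs to $\cA$, and by linearity together with $\varphi(I)=I$ we have $\varphi(\lambda I-A)=\lambda I-\varphi(A)$. Theorem~\ref{singular-values-to-change} applied to $\lambda I-A$ shows that $\lambda I-A$ and $\lambda I-\varphi(A)$ have the same list of singular values; since a square matrix is invertible precisely when all of its singular values are strictly positive, $\lambda I-A$ is invertible if and only if $\lambda I-\varphi(A)$ is. Therefore $\lambda\in\sigma(A)$ exactly when $\lambda\in\sigma(\varphi(A))$, so $\sigma(\varphi(A))=\sigma(A)$.

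Nilpotence is then a formality: a matrix in $M_n(\bbC)$ is nilpotent if and only if its spectrum is $\{0\}$, so a nilpotent $A\in\cA$ has $\sigma(\varphi(A))=\sigma(A)=\{0\}$ and $\varphi(A)$ is nilpotent. I do not expect any genuine obstacle here; the only two points to handle with a little care are the use of unitality to pull $\varphi$ through the shift $\lambda I-(\,\cdot\,)$, and the elementary fact that invertibility of a square matrix is detected by the positivity of its smallest singular value.
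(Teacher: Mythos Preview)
Your proof is correct and follows essentially the same route as the paper. The paper also derives rank preservation from Theorem~\ref{singular-values-to-change} (via its immediate Corollary~\ref{local-decomposition}), then obtains spectrum preservation by the identical mechanism of applying rank/singular-value preservation to $\lambda I-A$ and using unitality to identify $\varphi(\lambda I-A)$ with $\lambda I-\varphi(A)$; nilpotence then follows from $\sigma=\{0\}$, exactly as you observe.
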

\begin{proof}
It follows from Corollary~\ref{local-decomposition} that $\varphi$ preserves the rank of matrices in~$\cA$. Now, since $\varphi$ is unital and linear, we get for every $\lambda\in\bbC$ and $A\in\cA$:
$$
\lambda\in\sigma(A)\Longleftrightarrow\rk(\lambda I-A)<n\Longleftrightarrow\rk(\varphi(\lambda I-A))<n\Longleftrightarrow\lambda\in\sigma(A).
$$
Thus, $\varphi$ preserves the spectrum.
\end{proof}


To conclude this section, we record two auxiliary statements which will be heavily used throughout the rest of the paper.

\begin{proposition}\label{numerical-range}
Let $\cL$ be a space of $n\times n$ matrices containing the identity matrix and $\varphi:\cL\to M_n(\bbC)$ be a unital linear isometry. 
Then $\varphi$ preserves the numerical range of matrices.
\end{proposition}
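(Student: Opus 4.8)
The plan is to describe the numerical range of an arbitrary $B\in M_n(\bbC)$ purely in terms of the operator norms of its translates $B-\lambda I$, so that the hypotheses on $\varphi$ do the rest. Recall $W(B)=\{\langle Bx,x\rangle:x\in\bbC^n,\ \norm{x}=1\}$. The crucial step will be the identity
\begin{equation}\label{eq:nr-disks}
W(B)=\bigcap_{\lambda\in\bbC}\bigl\{\,w\in\bbC : \abs{w-\lambda}\le\norm{B-\lambda I}\,\bigr\},
\end{equation}
valid for every $B\in M_n(\bbC)$. Granting \eqref{eq:nr-disks}, the proposition follows at once: if $\varphi\colon\cL\to M_n(\bbC)$ is a unital linear isometry and $A\in\cL$, then for each $\lambda\in\bbC$ we have $A-\lambda I\in\cL$ and $\varphi(A-\lambda I)=\varphi(A)-\lambda I$, so $\norm{\varphi(A)-\lambda I}=\norm{A-\lambda I}$; applying \eqref{eq:nr-disks} to both $B=\varphi(A)$ and $B=A$ gives $W(\varphi(A))=W(A)$.

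To establish \eqref{eq:nr-disks}, the inclusion ``$\subseteq$'' is the Cauchy--Schwarz estimate: for a unit vector $x$, $\abs{\langle Bx,x\rangle-\lambda}=\abs{\langle(B-\lambda I)x,x\rangle}\le\norm{(B-\lambda I)x}\le\norm{B-\lambda I}$. For ``$\supseteq$'' I would use that $W(B)$ is compact and convex (Toeplitz--Hausdorff), hence equal to the intersection of its supporting half-planes. For $\theta\in\bbR$ put $H_\theta=\tfrac12(e^{i\theta}B+e^{-i\theta}B^*)$, a Hermitian matrix; since $\Re\langle e^{i\theta}Bx,x\rangle=\langle H_\theta x,x\rangle$ and the numerical range of a Hermitian matrix is the segment between its extreme eigenvalues, the support function of $W(B)$ is $\max\{\Re(e^{i\theta}w):w\in W(B)\}=\lambda_{\max}(H_\theta)=:\mu_\theta$, so $W(B)=\bigcap_{\theta}\{w:\Re(e^{i\theta}w)\le\mu_\theta\}$. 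It thus suffices to show the right-hand side of \eqref{eq:nr-disks} lies in each half-plane $\{w:\Re(e^{i\theta}w)\le\mu_\theta\}$. Fix $\theta$ and set $\lambda_t=-te^{-i\theta}$ for $t>0$. From $(B-\lambda_t I)^*(B-\lambda_t I)=t^2I+2tH_\theta+B^*B$ and Weyl's perturbation inequality one gets $\norm{B-\lambda_t I}^2=t^2+2t\mu_\theta+O(1)$, hence $\norm{B-\lambda_t I}=t+\mu_\theta+O(1/t)$ as $t\to+\infty$; meanwhile $\abs{w-\lambda_t}=\abs{e^{i\theta}w+t}=t+\Re(e^{i\theta}w)+O(1/t)$. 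Therefore if $\Re(e^{i\theta}w)>\mu_\theta$, then $\abs{w-\lambda_t}>\norm{B-\lambda_t I}$ for all large $t$, so $w$ misses the disk of radius $\norm{B-\lambda_t I}$ about $\lambda_t$ and is not in the intersection in \eqref{eq:nr-disks}. This proves \eqref{eq:nr-disks}.

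The only genuine computation is the asymptotic $\norm{B-\lambda_t I}=t+\mu_\theta+O(1/t)$, and this is the step I expect to require the most care: it amounts to $\lambda_{\max}(t^2I+2tH_\theta+B^*B)=t^2+2t\lambda_{\max}(H_\theta)+O(1)$ as $t\to\infty$, which follows by applying Weyl's inequality to $2tH_\theta+B^*B=2t\bigl(H_\theta+\tfrac1{2t}B^*B\bigr)$, together with $\sqrt{t^2+2t\mu_\theta+O(1)}=t+\mu_\theta+O(1/t)$. The remaining ingredients — unitality and linearity of $\varphi$ to transfer norms of translates, and the Cauchy--Schwarz half of \eqref{eq:nr-disks} — are routine.
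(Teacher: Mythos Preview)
Your proof is correct, and the underlying idea matches the paper's: both arguments rest on the characterisation of $W(B)$ as the intersection $\bigcap_{\lambda}\{w:|w-\lambda|\le\|B-\lambda I\|\}$ (equivalently, the paper's $\{z:|\alpha z+\beta|\le\|\alpha R+\beta I\|\ \forall\alpha,\beta\}$), which makes numerical-range preservation an immediate consequence of unitality and the isometry hypothesis. The difference is in how that characterisation is obtained. The paper invokes the Bonsall--Duncan theory of the \emph{algebraic} numerical range (the set of values $\psi(R)$ over norm-one unital functionals $\psi$), citing that this set equals the disk-intersection and, for matrices, coincides with the classical $W(R)$. You instead give a self-contained proof: Cauchy--Schwarz for one inclusion, and for the other Toeplitz--Hausdorff together with an explicit asymptotic computation of $\|B+te^{-i\theta}I\|$ to recover each supporting half-plane of $W(B)$. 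Your route is more elementary (no Banach-algebra numerical-range machinery) at the cost of a short but genuine eigenvalue-perturbation estimate; the paper's route is a one-line citation but presupposes the Bonsall--Duncan framework. Either way the proposition follows.
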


\begin{proof} If $R\in\cL$, then the set $\{z\in\bbC\,:\,|\alpha z+\beta|\leq\|\alpha R+\beta I\|, \forall\,\alpha,\beta\in\bbC\}$ 
coincides the algebraic numerical range of $R$, which is the set of all complex numbers of the form 
$\psi(R)$, where $\psi:\cL\to\bbC$ is a linear functional such that $\|\psi\|=\psi(I)=1$ \cite[Chapter 1]{BD72}. 
Further, as each $R\in\cL$ is an operator
acting on a finite-dimensional Hilbert space (namely, $\bbC^n$), the algebraic numerical range coincides with the classical
numerical range of $R$, namely the set 
$W(R)=\{\langle R\xi,\xi\rangle\,:\,\xi\in\bbC^n,\;\|\xi\|=1\}$ \cite{BD72}. Thus,
\[
z\in W(R) \mbox{ if and only if }
|\alpha z+\beta|\leq\|\alpha R+\beta I\| \mbox{ for all }\alpha,\beta\in\bbC.
\]
Hence, $W(\varphi(R))=W(R)$ for every unital linear isometry $\varphi:\cL\to M_n(\bbC)$ and every $R\in\cL$.
\end{proof}


In the following statement, for a natural number $d$, we use the symbol $J_d$ to denote the $d\times d$ Jordan block with the zero diagonal. Notice that in this notation, $S=J_n$.

\begin{proposition}\label{fixed-point}
If $\varphi:\cA\to M_n(\bbC)$ is a untial linear isometry, $\varphi(S)=USU^*$ for some unitary matrix~$U$.
\end{proposition}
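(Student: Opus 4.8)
The plan is to show that $\varphi(S)$ is a nilpotent matrix satisfying the hypotheses of the first lemma of Section~\ref{multiplicative-section}, namely that $\varphi(S)$ is nilpotent with $\|\varphi(S)\| = \|\varphi(S)^{n-1}\| = 1$; that lemma then yields a unitary $U$ with $\varphi(S) = USU^*$. By Corollary~\ref{nilpotence-preservation}, since $S$ is nilpotent and $\varphi$ is a unital linear isometry, $\varphi(S)$ is nilpotent, and moreover $\varphi$ preserves the rank, so $\rk\varphi(S) = \rk S = n-1$. Also $\|\varphi(S)\| = \|S\| = 1$ because $\varphi$ is an isometry. It remains only to produce the single numerical relation $\|\varphi(S)^{n-1}\| = 1$, or equivalently (since $\varphi(S)$ is a rank-$(n-1)$ nilpotent matrix and hence unitarily a single Jordan block times its singular values, its $(n-1)$st power is automatically rank one with norm equal to the product of the $n-1$ super-diagonal moduli) to show that those moduli are all equal to $1$.

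First I would use Proposition~\ref{numerical-range}: $\varphi$ preserves the numerical range, so $W(\varphi(S)) = W(S)$. The numerical range of $S$ is a disk centered at the origin; its radius is $w(S) = \cos\frac{\pi}{n+1}$, the numerical radius of the $n\times n$ nilpotent Jordan block. Thus $\varphi(S)$ is a rank-$(n-1)$ nilpotent matrix whose numerical radius equals $\cos\frac{\pi}{n+1}$ and whose operator norm equals $1$. Next, put $\varphi(S)$ in upper-triangular form via a unitary similarity (which changes neither the norm, the numerical range, nor the rank), so that $\varphi(S)$ is strictly upper triangular with super-diagonal entries $x_1,\dots,x_{n-1}$ and with $|x_i|\le 1$ for each $i$. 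The key point is that among all strictly upper-triangular nilpotent matrices with $|x_i|\le 1$ and operator norm $1$, the numerical radius is maximized precisely when all $|x_i| = 1$ (the extremal case being the Jordan block $S$ itself, up to a diagonal unitary). Since $w(\varphi(S)) = w(S)$ forces equality in this extremal relation, we conclude $|x_1| = \dots = |x_{n-1}| = 1$, whence $\|\varphi(S)^{n-1}\| = |x_1\cdots x_{n-1}| = 1$. Then the first lemma of Section~\ref{multiplicative-section} gives $\varphi(S) = USU^*$.

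The main obstacle is making rigorous the extremality claim that $w$ is maximized on the norm-one strictly-upper-triangular nilpotents exactly at the full Jordan block. One clean route is to avoid it entirely: instead of the numerical radius, use that $\varphi$ preserves the \emph{entire} numerical range, and in particular the boundary circle of radius $\cos\frac{\pi}{n+1}$ is attained. Since $\|\varphi(S)\| = 1$ there is a unit vector $\xi$ with $\|\varphi(S)\xi\| = 1 = \|\varphi(S)\|\,\|\xi\|$; combined with the rank-$(n-1)$ and nilpotency constraints and the Cauchy–Schwarz characterization of when $\langle\varphi(S)\eta,\eta\rangle$ lands on the boundary circle of $W(\varphi(S))$, one extracts that the singular values of $\varphi(S)$ are forced. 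Alternatively — and this is perhaps the slickest — one notes $\|S + S^*\| = 2\cos\frac{\pi}{n+1}$ since $S+S^*$ is the tridiagonal matrix whose norm is this classical eigenvalue; a unital linear isometry on $\cA$ need not see $S^*$, so this needs the numerical-range route, $w(\varphi(S)) = w(S) = \cos\frac{\pi}{n+1}$, together with $\|\varphi(S)\|=1$. Then for the upper-triangularized $\varphi(S)$ with super-diagonal $x_1,\dots,x_{n-1}$, restricting to the two-dimensional subspaces spanned by consecutive basis vectors shows each $2\times 2$ corner $\begin{bmatrix} 0 & x_i \\ 0 & 0\end{bmatrix}$ has numerical radius $|x_i|/2 \le w(\varphi(S))$, which is not by itself enough; the genuine work is a compactness/Lagrange-multiplier argument on the unit sphere showing the constrained maximum of $w$ is attained only at $|x_i|\equiv 1$. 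I would carry out that optimization as the crux of the proof, and then invoke the first lemma of Section~\ref{multiplicative-section} to finish.
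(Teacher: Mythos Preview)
Your approach is essentially the paper's: both use Corollary~\ref{nilpotence-preservation} to get that $\varphi(S)$ is nilpotent of rank $n-1$, Proposition~\ref{numerical-range} to get $w(\varphi(S))=w(S)=\cos\frac{\pi}{n+1}$, and the isometry hypothesis to get $\|\varphi(S)\|=1$. The difference is only in how the final extremality step is handled.

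The paper dispatches that step in one line by citing the Haagerup--de~la~Harpe theorem \cite[Theorem~1(2)]{HdlH}: a nilpotent contraction $R$ of order $d$ satisfies $w(R)\le\cos\frac{\pi}{d+1}$, with equality if and only if $J_d$ occurs as a unitary direct summand of $R$. Since $\rk\varphi(S)=n-1$ forces the nilpotency order to be exactly $n$, equality holds with $d=n$, so $J_n=S$ is a direct summand of the $n\times n$ matrix $\varphi(S)$, whence $\varphi(S)=USU^*$.

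What you propose instead---showing directly, via an optimization on strictly upper-triangular contractions, that $w=\cos\frac{\pi}{n+1}$ forces all super-diagonal moduli to equal $1$---is precisely the content of the Haagerup--de~la~Harpe theorem specialized to $M_n$. You correctly flag this as ``the main obstacle'' and ``the crux of the proof,'' but none of the sketches you offer (restricting to $2\times2$ corners, Cauchy--Schwarz at the boundary of $W$, a Lagrange-multiplier argument) actually closes it; the $2\times2$ corners give only $|x_i|/2\le\cos\frac{\pi}{n+1}$, which is vacuous, and the compactness argument gives existence of a maximizer but not its identification. The Haagerup--de~la~Harpe proof is genuinely nontrivial (it goes through a Fej\'er-type kernel inequality), so this is a real gap: either cite \cite{HdlH} as the paper does, or expect to reproduce a substantial argument. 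Once you have the extremality, your route through Lemma~2.1 is fine and equivalent to the paper's direct conclusion.
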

\begin{proof}
By Proposition~\ref{numerical-range}, the numerical range of $\varphi(A)$ coincides with the numerical range of $A$ for every $A\in\mathcal A$. 
In particular, the numerical ranges of $\varphi(S)$ and $S$ coincide. By Corollary~\ref{nilpotence-preservation}, $\varphi(S)$ is nilpotent and 
$\rk\big(\varphi(S)\big)=\rk(S)=n-1$.

We recall now a result of Haagerup and de la Harpe: if a Hilbert space operator $R$ is nilpotent of order $d$, then the numerical radius $w(R)\leq\|R\|\cos\frac{\pi}{d+1}$, and equality holds for a
contraction $R$ if and only if $J_d$ is a direct summand of $R$ \cite[Theorem 1(2)]{HdlH}. It follows that $\varphi(S)$ must contain, after a unitary similarity, a direct summand~$J_n$. Since the size of $J_n$ is~$n$, we get
$$
\varphi(S)=USU^*
$$
for some unitary matrix~$U$.
\end{proof}


The following proposition uses methods in operator algebras to describe the structure of the linear isometries from the algebra $\cA$ of upper-triangular Toeplitz matrices to $M_n(\bbC)$ under the additional assumption that the isometry is completely contractive. This proposition will be generalized in Section~\ref{the-structure-section} where the assumption that the map is completely contractive will be dropped. 

\begin{proposition} If $\varphi:\mathcal A\rightarrow M_n(\mathcal C)$ is a unital isometry such that $\varphi(S)$ is nilpotent
and $\varphi$ is completely contractive, then there exists a unitary $V$ such that $\varphi(A)=V^*AV$ for every $A\in\mathcal A$.
\end{proposition}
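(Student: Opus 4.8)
The plan is to exploit the universal property of the Toeplitz algebra $\cA$ together with the dilation theory available for completely contractive maps. Since $\varphi$ is completely contractive and unital, by the Arveson–Stinespring dilation theorem $\varphi$ extends to a unital completely positive map on $C^*(S)=M_n(\bbC)$; equivalently, $\varphi$ has the form $\varphi(A)=W^*\pi(A)W$ for some unital $*$-representation $\pi$ of some C$^*$-algebra containing $\cA$ and an isometry $W$. The cleaner route, though, is to use the known universal property: $\cA$ is completely isometrically isomorphic to the unital operator algebra generated by the universal nilpotent contraction of order $n$ (the truncated shift), so unital completely contractive maps $\cA\to M_n(\bbC)$ correspond bijectively to images $\varphi(S)=N$ where $N$ is a nilpotent contraction with $N^n=0$. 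By Proposition~\ref{fixed-point} we already know $\varphi(S)=USU^*$ for a unitary $U$; the point is that $\varphi$ is \emph{determined} by this value since $S$ generates $\cA$ as a unital algebra.

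First I would record that $\varphi$ is automatically multiplicative on $\cA$. This is the crux: a unital completely contractive map of $\cA$ into $M_n(\bbC)$ that sends $S$ to a nilpotent contraction $N$ with $N^n=0$ must send $f(S)$ to $f(N)$ for every polynomial $f$, because the map $f(S)\mapsto f(N)$ is the unique unital completely contractive map prescribed by the functional calculus for the $C^*$-envelope of $\cA$ (whose generator $S$ has the universal property among nilpotent contractions of order $n$). Concretely, one can argue directly: consider the map $\psi:\cA\to M_n(\bbC)$ defined by $\psi(f(S))=f(N)$ where $N=\varphi(S)$; this is a well-defined unital algebra homomorphism (well-defined because $N^n=0=S^n$ so the relations defining $\cA$ as $\bbC[S]$ are respected), and both $\varphi$ and $\psi$ are unital completely contractive maps agreeing on $I$ and on $S$. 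Since $S$ generates $\cA$ and the operator-algebra structure of $\cA$ is the universal one on a nilpotent contraction of order $n$, we get $\varphi=\psi$, hence $\varphi$ is multiplicative.

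Once $\varphi$ is known to be a unital isometric \emph{homomorphism}, I would invoke the result of Farenick, Gerasimova, and Shvai~\cite{FGS11} quoted in the introduction: every unital isometric homomorphism $\cA\to M_n(\bbC)$ has the form $A\mapsto VAV^*$ (written there with $U$; taking $V=U^*$ gives $\varphi(A)=V^*AV$) for a unitary matrix. Alternatively, and to keep the argument self-contained, one can reprove this directly: $\varphi(S)=USU^*$, so after replacing $\varphi$ by $A\mapsto U^*\varphi(A)U$ we may assume $\varphi(S)=S$; then multiplicativity forces $\varphi(S^k)=S^k$ for all $k$, and since $\{I,S,\dots,S^{n-1}\}$ is a basis of $\cA$ and $\varphi$ is linear, $\varphi$ is the identity on $\cA$. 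Undoing the conjugation gives $\varphi(A)=U\varphi_0(A)U^*=UAU^*$, i.e.\ $\varphi(A)=V^*AV$ with $V=U^*$.

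The main obstacle is justifying that a unital completely contractive map is determined by its value on the single generator $S$ — i.e.\ that the operator-algebra structure on $\cA$ is genuinely the universal structure carried by a nilpotent contraction of order $n$, so that $\varphi(f(S))=f(\varphi(S))$ automatically. This is where "completely contractive" is essential (mere contractivity would not suffice), and it is the step that the later Section~\ref{the-structure-section} must work hard to bypass once the hypothesis of complete contractivity is dropped. I would handle it by citing the relevant operator-algebra literature on the $C^*$-envelope of $\cA$ and on dilations of nilpotent contractions (the same circle of ideas as in \cite{Paulsen-book}), rather than reproving it from scratch.
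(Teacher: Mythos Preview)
Your overall strategy is close to the paper's, but the key step has a gap. You argue that $\varphi=\psi$ because both are unital completely contractive maps agreeing on $I$ and $S$, invoking that ``the operator-algebra structure of $\cA$ is the universal one on a nilpotent contraction of order $n$.'' That universal property, however, is a statement about unital completely contractive \emph{homomorphisms}: it says that for each nilpotent contraction $N$ with $N^n=0$ there is a unique such homomorphism $\cA\to\cB(\cH)$ sending $S\mapsto N$. It gives no uniqueness among unital completely contractive \emph{linear} maps. Two such maps agreeing on a single algebra generator need not agree on its powers; your assertion $\varphi=\psi$ is therefore exactly the statement $\varphi(S^k)=\varphi(S)^k$ you are trying to prove, and the universal property does not deliver it.

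What actually closes this gap is Arveson's Boundary Theorem, and that is precisely what the paper invokes: extend $\varphi$ to a unital completely positive map $\Phi:M_n(\bbC)\to M_n(\bbC)$ via \cite[Propositions~2.12, 3.5 and Theorem~7.5]{Paulsen-book}, conjugate so that the resulting map fixes $S$, and then apply \cite[Theorem~2.1.1]{arveson1972} (an irreducible fixed point of a unital completely positive self-map of $M_n(\bbC)$ forces the map to be the identity). Your remark that you would ``cite the relevant operator-algebra literature on the $C^*$-envelope'' is in the right neighbourhood---the identity representation of $\cA$ being a boundary representation is exactly the rigidity you need---but this is a different and deeper fact than the homomorphism universal property you actually name. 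Once the boundary theorem is supplied, your argument and the paper's coincide; your self-contained endgame (reduce to $\varphi(S)=S$ and use linearity on the basis $I,S,\dots,S^{n-1}$) is a clean alternative to citing \cite{FGS11}.
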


\begin{proof}
Consider the operator system $\mathcal S=\{A+B^*\,:\,A,B\in\mathcal A\}$ and the linear map $\tilde\varphi$ on $\mathcal S$
defined by 
\[
\tilde\varphi(A+B^*)=\varphi(A)+\varphi(B)^*.
\]
By \cite[Proposition 2.12, 3.5]{Paulsen-book}, $\tilde\varphi$ is well defined and is a completely positive 
and completely contractive linear extension of $\varphi$ to $\mathcal S$. By the Hahn-Banach theorem for completely
positive maps  \cite[Theorem 7.5]{Paulsen-book}, $\tilde\varphi$ admits a completely positive extension to $M_n(\mathbb C)$, which we denote by $\Phi$.
That is, $\Phi:M_n(\mathbb C)\rightarrow M_n(\mathbb C)$ is a unital completely positive linear map for which
$\Phi(A)=\varphi(A)$ for every $A\in \mathcal A$.

By Proposition~\ref{fixed-point}, $\Phi(S)=V^*SV$ for some unitary $V$, where $S$ is the $n\times n$ Jordan block with zero diagonal. Now consider the unital completely positive map $\Psi:M_n(\mathbb C)\rightarrow M_n(\mathbb C)$
defined by $\Psi(X)=V\Phi(X)V^*$. Thus, $S$ is an irreducible fixed point of $\Psi$. By Arveson's Boundary Theorem \cite[Theorem 2.1.1]{arveson1972}, \cite[Theorem 3.1]{farenick2011},
$\Psi(X)=X$ for all $X\in M_n(\mathbb C)$, which implies that $\varphi(A)=V^*AV$.
\end{proof}


\section{Spatial properties of the isometries}\label{spatial-properties-section}

The purpose of this section is to establish that if $\varphi:\cA\to M_n(\bbC)$ is a linear isometry, 
then the ranges and the kernels of $\varphi(S^k)$ are nested, where (as usual) $S$ is the $n\times n$ nilpotent Jordan block.

We start the section by recording the following useful lemma.


\begin{lemma}\label{nilpotence-examined}
Let $n\ge 2$, $A\in M_{n-1}(\bbC)$, $x,y\in\bbC^{n-1}$ and $\alpha\in\bbC$ are such that
$$
\rk\left(\begin{bmatrix}
x & (A-\lambda I_{n-1})\\
\alpha & y^T
\end{bmatrix}\right)=\rk(A-\lambda I_{n-1})=n-1
$$
for all $\lambda\ne 0$. Then $\alpha=0$, $A$ is nilpotent, and $y^TA^{k}x=0$ for all $k=0,1,\dots,n-1$.
\end{lemma}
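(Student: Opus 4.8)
The plan is to read off everything from the single polynomial identity $\det M(\lambda)\equiv 0$, where $M(\lambda)$ denotes the displayed $n\times n$ matrix, and then to expand a resolvent using nilpotency.

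First I would dispose of the nilpotency of $A$. The hypothesis $\rk(A-\lambda I_{n-1})=n-1$ for all $\lambda\ne 0$ says that $A-\lambda I_{n-1}$ is invertible for every nonzero $\lambda$, so $0$ is the only eigenvalue of $A$; hence $A$ is nilpotent and, since $A\in M_{n-1}(\bbC)$, Cayley--Hamilton gives $A^{n-1}=0$. Next, since $\rk M(\lambda)=n-1<n$ we have $\det M(\lambda)=0$ for every $\lambda\ne 0$. Moving the first column of $M(\lambda)$ to the last position is a cyclic permutation of the $n$ columns, so $\det M(\lambda)=(-1)^{n-1}\det\begin{bmatrix}A-\lambda I_{n-1} & x\\ y^T & \alpha\end{bmatrix}$, and for $\lambda\ne 0$ the Schur-complement formula (applicable precisely because $A-\lambda I_{n-1}$ is invertible there) yields
\[
\det M(\lambda)=(-1)^{n-1}\det(A-\lambda I_{n-1})\bigl(\alpha-y^T(A-\lambda I_{n-1})^{-1}x\bigr).
\]
Since $\det(A-\lambda I_{n-1})\ne 0$ for $\lambda\ne 0$, this forces $\alpha=y^T(A-\lambda I_{n-1})^{-1}x$ for all $\lambda\ne 0$.

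Finally I would expand the resolvent using $A^{n-1}=0$:
\[
(A-\lambda I_{n-1})^{-1}=-\lambda^{-1}\sum_{k\ge 0}\lambda^{-k}A^k=-\sum_{k=0}^{n-2}\lambda^{-(k+1)}A^k ,
\]
so the identity above becomes $\alpha+\sum_{k=0}^{n-2}\lambda^{-(k+1)}\,y^TA^kx=0$ for all $\lambda\ne 0$. The left-hand side is a Laurent polynomial in $\lambda$ vanishing on $\bbC\setminus\{0\}$, hence all its coefficients vanish: the constant term gives $\alpha=0$, and the coefficient of $\lambda^{-(k+1)}$ gives $y^TA^kx=0$ for $k=0,1,\dots,n-2$. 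The remaining case $k=n-1$ is automatic because $A^{n-1}=0$. This gives all three conclusions. I do not expect a genuine obstacle here; the only points needing care are the justification that the Schur-complement computation is valid exactly on the set $\lambda\ne 0$ where the conclusion is needed, and the (ultimately irrelevant) sign from the column permutation.
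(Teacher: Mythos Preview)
Your proposal is correct and follows essentially the same route as the paper: both arguments observe that $A$ is nilpotent from the rank hypothesis, use the cyclic column permutation and a Schur-complement computation to derive $\alpha=y^T(A-\lambda I_{n-1})^{-1}x$ for all $\lambda\ne 0$, and then expand the resolvent of the nilpotent $A$ to read off $\alpha=0$ and $y^TA^kx=0$. The only cosmetic differences are that the paper phrases the Schur step via rank and elementary row operations rather than the determinant formula, and extracts the coefficients by iterated limits $\lambda\to\infty$ rather than by invoking that a vanishing Laurent polynomial has vanishing coefficients.
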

\begin{proof}
First, observe that since $\rk(A-\lambda I_{n-1})=n-1$ for all $\lambda\ne 0$, the matrix $A$ must be nilpotent.

Fix a non-zero $\lambda\in\bbC$ and denote the matrix $A-\lambda I_{n-1}$ by~$B$. Consider the matrix
$$
T=\begin{bmatrix}
x & B\\
\alpha & y^T
\end{bmatrix}.
$$
Let $P$ be the cyclic permutation matrix and consider 
$$
R=T P=\begin{bmatrix}
B & x\\
y^T & \alpha 
\end{bmatrix}.
$$
Clearly $\rk R=\rk B=n-1$. Now, consider the product
$$
Q=\begin{bmatrix}
I_{n-1} & 0\\
-y^TB^{-1}\  & 1
\end{bmatrix}
\cdot
\begin{bmatrix}
B & x\\
y^T & \alpha 
\end{bmatrix}
=
\begin{bmatrix}
B & x\\
0^T & \alpha-y^TB^{-1}x
\end{bmatrix}.
$$
Since, obviously, $\rk Q=\rk R=n-1$ and $\rk B=n-1$, we must have $\alpha = y^TB^{-1}x$.

So, this shows that $\alpha = y^T(A-\lambda I_{n-1})^{-1}x$ for all $\lambda\ne 0$. Letting $\lambda\to\infty$, we get $\alpha = 0$. 

Writing $(A-\lambda I_{n-1})^{-1}$ as a Neumann series (see, e.g., \cite[Theorem~6.12]{AA02}) and using the fact that $A$ is nilpotent, we get, for all non-zero~$\lambda$,
$$
0=y^T(A-\lambda I_{n-1})^{-1}x=\frac{1}{\lambda}\sum_{k=0}^{n-1} \frac{1}{\lambda^k}y^TA^kx.
$$
Multiplying by~$\lambda$, we obtain
\begin{equation}\label{eq-1}
\sum_{k=0}^{n-1} \frac{1}{\lambda^k}y^TA^kx = 0;
\end{equation}
Letting $\lambda\to\infty$, we get $y^Tx=0$; multiplying~\eqref{eq-1} by~$\lambda$ and letting, again, $\lambda\to\infty$, we get $y^TAx=0$. Repeating $n-1$ times, we obtain $y^TA^kx=0$ for all $k=0,1,\dots,n-1$.
\end{proof}


The following theorem is the main statement of this section.

\begin{theorem}\label{nested}
If $\varphi:\cA\to M_n(\bbC)$ is a linear isometry, then
$$
\ker\big(\varphi(S)\big)\subseteq\ker\big(\varphi(S^k)\big)
$$
and
$$
\ran\big(\varphi(S)\big)\supseteq\ran\big(\varphi(S^k)\big)
$$
for all $k=1,2,\dots,n-1$.
\end{theorem}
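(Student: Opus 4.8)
The plan is to reduce to a unital isometry fixing $S$, restate the two inclusions as the vanishing of the first column and the last row of each $\varphi(S^k)$, and then feed a matrix pencil into Lemma~\ref{nilpotence-examined}.

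\textit{Reductions.} By Corollary~\ref{additional-observations} I may, after composing $\varphi$ with multiplication by a unitary, assume $\varphi(I)=I$; then $\varphi$ is a unital linear isometry, so Proposition~\ref{fixed-point} gives $\varphi(S)=USU^*$, and replacing $\varphi$ by $X\mapsto U^*\varphi(X)U$ I may assume $\varphi(S)=S$. Neither step changes the two inclusions (they are transported by the fixed invertible maps involved), so this is no loss of generality. With $\varphi(S)=S$ one has $\ker\varphi(S)=\bbC e_1$ and $\ran\varphi(S)=\mathrm{span}\{e_1,\dots,e_{n-1}\}=\ker(e_n^T)$, so Theorem~\ref{nested} becomes: for every $k$ with $2\le k\le n-1$, $\varphi(S^k)e_1=0$ and the last row of $\varphi(S^k)$ vanishes. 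Since the upper-triangular Toeplitz matrices are persymmetric, i.e.\ $WX^TW=X$ for all $X\in\cA$, where $W$ reverses the coordinates (this holds because $WS^TW=S$ and $\cA$ is generated by $S$), the map $\psi(X):=W\varphi(X)^TW$ is again a unital linear isometry with $\psi(S)=S$, and ``$\psi(S^k)e_1=0$'' says precisely that the last row of $\varphi(S^k)$ is zero. Hence it suffices to prove $\varphi(S^k)e_1=0$ for every unital linear isometry fixing $S$.

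\textit{The pencil and the lemma.} Fix $2\le k\le n-1$. Since $S^k-\lambda S=S(S^{k-1}-\lambda I)$ and $S^{k-1}-\lambda I$ is invertible for $\lambda\ne 0$, we have $\rk(S^k-\lambda S)=\rk S=n-1$; as $\varphi$ preserves rank (Corollary~\ref{nilpotence-preservation}) and $\varphi(S^k)-\lambda S=\varphi(S^k-\lambda S)$, also $\rk\bigl(\varphi(S^k)-\lambda S\bigr)=n-1$ for all $\lambda\ne 0$. Partition $\bbC^n=\bbC^{n-1}\oplus\bbC$ so that $S=\left(\begin{smallmatrix}0&I_{n-1}\\0&0\end{smallmatrix}\right)$, and write $\varphi(S^k)=\left(\begin{smallmatrix}x&A\\ \alpha&y^T\end{smallmatrix}\right)$, where $x$ is the first column of $\varphi(S^k)$ truncated to its first $n-1$ entries, $\alpha$ is the $(n,1)$-entry, $y^T$ is the last row truncated, and $A$ is the submatrix in rows $1,\dots,n-1$ and columns $2,\dots,n$. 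Then $\varphi(S^k)-\lambda S=\left(\begin{smallmatrix}x&A-\lambda I_{n-1}\\ \alpha&y^T\end{smallmatrix}\right)$, exactly the shape treated in Lemma~\ref{nilpotence-examined}; once that lemma applies, it yields $\alpha=0$, $A$ nilpotent, and $y^TA^jx=0$ for $j=0,\dots,n-1$, and from this one wants $x=0$, i.e.\ $\varphi(S^k)e_1=0$.

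\textit{The main obstacle.} Two points remain, and this is where the real work lies. First, Lemma~\ref{nilpotence-examined} also requires $\rk(A-\lambda I_{n-1})=n-1$ for $\lambda\ne 0$, that is, $A$ nilpotent; this is \emph{not} a formal consequence of the rank of the pencil (there are nilpotent $M$ with $\rk(M-\lambda S)\equiv n-1$ whose middle block is not nilpotent), so one must bring in the isometric nature of $\varphi$ beyond rank preservation — for instance by applying Proposition~\ref{numerical-range} and Theorem~\ref{singular-values-to-change} to the matrices $S+tS^k$, so that $S+t\varphi(S^k)=\varphi(S+tS^k)$ has the numerical range and singular values of $S+tS^k$ for every $t$, which constrains the compression $A$ to be nilpotent (the diagonal-unitary-similarity identity $\|S+\beta\varphi(S^k)\|=\|S+|\beta|\,S^k\|$ is a related tool). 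Second, from $\alpha=0$ and $y^TA^jx=0$ one extracts $x=0$ provided $y$ is a cyclic vector for $A^T$, and the fact that the pencil has rank \emph{exactly} $n-1$ for all $\lambda\ne 0$ — neither higher nor lower — is what pins down $A$ as a single $(n-1)\times(n-1)$ nilpotent Jordan block together with the cyclicity of $x$ and $y$. Completing these two verifications is the crux; granting them, $\varphi(S^k)e_1=0$ for all $k$, and the range inclusion follows by applying the conclusion to the isometry $\psi$.
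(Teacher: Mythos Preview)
Your reductions are clean, and the persymmetry trick $\psi(X)=W\varphi(X)^TW$ reducing the range inclusion to the kernel inclusion is correct and tidier than the paper's ``analogously''. The pencil $\varphi(S^k)-\lambda S$ and the shape match to Lemma~\ref{nilpotence-examined} are also exactly how the paper intends the lemma to be applied.

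The genuine gap is your second crux, and it cannot be closed along the line you sketch. The outputs $\alpha=0$, $A$ nilpotent, and $y^TA^jx=0$, together with the constant pencil rank $n-1$, do \emph{not} pin $A$ down as a single Jordan block, nor make $y$ (or $x$) cyclic. For $n=4$, $k=2$, take $T=e_1e_4^T+e_2e_1^T$: this is a nilpotent partial isometry of rank $2$ with $\alpha=0$, $y=0$, middle block $A=e_1e_3^T$ nilpotent of Jordan type $(2,1)$, and $\rk(T-\lambda S)=3$ for all $\lambda\ne0$ --- yet $x=e_2\ne0$. So rank and spectrum information alone are insufficient. What the paper actually uses is singular-value preservation on $S+S^k$ in a much sharper form: since $S+T_k=\varphi(S+S^k)$ has the same singular values as $S+S^k$, the matrices $(S+S^k)(S+S^k)^*$ and $(S+T_k)(S+T_k)^*$ are unitarily similar, so the traces of their squares agree. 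Writing $E=SP$, $M=S^kP$, $N=T_kP$ with $P$ the cyclic permutation, a term-by-term comparison of the sixteen summands in each square shows that the $T_k$-side is \emph{strictly smaller} whenever $y_k\ne0$, the strict drop coming from $\tr(E\,NN^*)<\tr(NN^*)$, i.e., mass of the range projection $T_kT_k^*$ leaking into the last coordinate. That contradiction forces $y_k=0$; your persymmetry trick (or the dual computation) then gives $x_k=0$. Your gestures towards numerical range and the identity $\|S+\beta\varphi(S^k)\|=\|S+|\beta|S^k\|$ point in the right direction but are no substitute for this trace computation; note also that the nilpotence of $A_k$ --- your first obstacle, which the paper likewise does not spell out --- is \emph{used} precisely in this trace step, via $\tr A_k=\tr A_k^2=0$.
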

\begin{proof}
By Corollary~\ref{additional-observations}, we may assume that $\varphi$ is unital. By Proposition~\ref{fixed-point}, there is a unitary $U_0$ such that $\varphi(S)=U_0SU_0^*$. Composing $\varphi$ with the unitary similarity by~$U_0^*$, we may assume without loss of generality that 
$$
\varphi(S)=S.
$$
For each $k=1,2,\dots,n-1$, let us use the notation
$$
T_k=\varphi(S^k).
$$
By Lemma~\ref{nilpotence-examined}, we have
$$
T_k=\begin{bmatrix}
x_k & A_k\\
0 & y_k^T
\end{bmatrix},
$$
where $A_k$ is a nilpotent $(n-1)\times(n-1)$-matrix and $y_k^Tx_k=0$.

Let $k$ be a fixed natural number between $2$ and~$n-1$. Since $\varphi(S+S^k) = S+T_k$, by Corollary~\ref{local-decomposition} there exist two $n\times n$ unitary matrices $U$ and $V$ such that 
\begin{equation}\label{eq:added}
S+T_k=U(S+S^k)V.
\end{equation}
Let $P$ denote the $n\times n$ cyclic permutation matrix,
$$
P=\begin{bmatrix}
0 &  & & 1\\
1	 & \ddots & & \\
  & \ddots & 0 & \\
 &  & 1 & 0
\end{bmatrix}
$$
(the empty spaces are filled with zeros). Consider the matrices $(S+S^k)P$ and $(S+T_k)P$. Let
$$
E=SP,\quad M=S^kP,\quad\mbox{and}\quad N=T_kP,
$$
so that $(S+S^k)P=E+M$ and $(S+T_k)P=E+N$. Observe that $E$ is the diagonal projection onto the span of the first $n-1$ basic vectors,
$$
E=\begin{bmatrix}
1 &        &   &   \\
  & \ddots &   &   \\
  &        & 1 &   \\
  &        &   & 0
\end{bmatrix},
$$
$M$ is a partial isometry of rank $n-k$ such that $M^*M\le E$ and $MM^*\le E$, and 
$$
N=\begin{bmatrix}
A_k & x_k\\
y_k^T & 0
\end{bmatrix}.
$$

In order to establish the proposition, we need to prove that $x_k=0$ and $y_k=0$. We will show that $y_k=0$; the statement about $x_k$ can be proven analogously.

So, let us assume that $y_k\ne 0$. Notice that by Corollary~\ref{additional-observations}, $N$ is a partial isometry. Let
$$
F=MM^*\quad\mbox{and}\quad G=NN^*,
$$
both $F$ and $M$ are orthogonal projections. Moreover, by Corollary~\ref{nilpotence-preservation}, $\rk(G)=\rk(N)=\rk(M)=\rk(F)$. Observe that the $(n,n)$-entry of $G$ is strictly positive, because $y_k\ne 0$.

Define now
\begin{align*}
A=(E+M)(E+M)^* & = E+ME+EM^*+MM^*\\%
& = E+M+M^*+F
\end{align*}
and
\begin{align*}
B=(E+N)(E+N)^* & = E+NE+EN^*+NN^*\\%
& = E+NE+EN^*+G.
\end{align*}
It is easy to see that, since $P$ is a unitary, $A=(S+S^k)(S+S^k)^*$ and $B=(S+T_k)(B+T_k)^*$. By the condition~\eqref{eq:added} this implies that the matrices $A$ and $B$ are unitarily similar. In particular,
$$
\tr(A^2)=\tr(B^2).
$$
We have:
\begin{align*}
A^2	& = (E+M+M^*+F)(E+M+M^*+F)= \\%
    & = E+EM+EM^*+EF   +   ME+M^2+MM^*+MF +\\
    & \quad\quad\  + M^*E+M^*M+M^{*2}+M^*F   +   FE+FM+FM^*+F,\quad\quad\quad\quad\quad\quad
\end{align*}
\begin{align*}
B^2	& = (E+NE+EN^*+G)(E+NE+EN^*+G)= \\%
    & = E+ENE+EN^*+EG   +   NE+NENE+NEN^*+NEG +\\
    & \quad\quad\  + EN^*E+EN^*NE+EN^*EN^*+EN^*G   +   GE+GNE+GEN^*+G.
\end{align*}
We claim that the trace of each summand in the expression for $A^2$ is always larger than or equal to the trace of the corresponding summand in the expression for~$B^2$. First, observe that the trace of every summand in the expression for $A^2$ is non-negative. Let us explore each summand. We have
\begin{equation}\label{eq-2}
  \tr(ENE)=\tr(NE)=\tr(A_k)=0,
\end{equation}
since the matrix $A_k$ is nilpotent. Similarly,
$$
\tr(EN^*)=0.
$$
Since $\rk(F)=\rk(G)$, $EF=F$, and the $(n,n)$-entry of $G$ is strictly positive, we have 
\begin{equation}\label{eq-2.1}
\tr(EG)<\tr(G)=\tr(F)=\tr(EF).
\end{equation}
Next, $\tr(NE)=0$, analogously to~\eqref{eq-2}. To find $\tr(NENE)$, observe that
$$
NE=\begin{bmatrix}
A_k & 0\\
y_k^T & 0
\end{bmatrix},
$$
so that
$$
NENE=\begin{bmatrix}
A_k & 0\\
y_k^T & 0
\end{bmatrix}
\cdot
\begin{bmatrix}
A_k & 0\\
y_k^T & 0
\end{bmatrix}
=
\begin{bmatrix}
A_k^2 & 0\\
* & 0
\end{bmatrix}.
$$
So, $\tr(NENE)=\tr(A_k^2)=0$, since $A_k$ is nilpotent. Next, for the summand $NEN^*$, observe that, as the diagonal entries of $N^*N$ are non-negative, we have
\[
\tr(NEN^*)=\tr(N^*NE)\le\tr(N^*N)=\rk(N^*N)=\rk(M^*M)=\tr(M^*M).
\]
For the remaining summands, using analogous considerations and the fact that $N$ is a partial isometry with $NN^*=G$, we obtain:\\
$\tr(NEG)=\tr(GNE)=\tr(NN^*NE)=\tr(NE)=0,\quad\mbox{by~\eqref{eq-2}};$\\
$\tr(EN^*E)=\tr(N^*E)=0,\quad\mbox{by~\eqref{eq-2}};$\\
$\tr(EN^*NE)=\tr(N^*NE)\le\tr(N^*N)=\rk(N^*N)=\rk(M^*M)=\tr(M^*M);$\\
$\tr(EN^*EN^*)=\overline{\tr(NENE)}=0$, by previous calculation;\\
$\tr(EN^*G)=\tr(EN^*NN^*)=\tr(EN^*)=\tr(N^*E)=0;$\\
$\tr(GE)=\tr(EG)<\tr(EF)=\tr(FE)$, by~\eqref{eq-2.1};\\
$\tr(GNE)=\tr(NEG)=0$, by previous calculation;\\
$\tr(GEN^*)=\overline{\tr(NEG)}=0$ because $E^*=E$ and $G^*=G$;\\
$\tr(G)=\rk(G)=\rk(F)=\tr(F)$.

These calculations establish our claim. Moreover, since some of the inequalities are strict, we derive that $\tr(B^2)<\tr(A^2)$, which is a contradiction. It follows that $y_k=0$ and hence $\ran\big(\varphi(S)\big)\supseteq\ran\big(\varphi(S^k)\big)$. 

The statement about the kernels is established analogously.
\end{proof}


\begin{corollary}\label{nested-explicit}
If $\varphi:\cA\to M_n(\bbC)$ is a linear isometry, then 
$$
\ker\big(\varphi(S)\big)\subsetneq\ker\big(\varphi(S^2)\big)\subsetneq\dots\subsetneq\ker\big(\varphi(S^{n-1})\big)
$$
and
$$
\ran\big(\varphi(S)\big)\supsetneq\ran\big(\varphi(S^2)\big)\supsetneq\dots\supsetneq\ran\big(\varphi(S^{n-1})\big).
$$
\end{corollary}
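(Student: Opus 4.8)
The plan is to argue by induction on $n$. For $n\le 2$ each of the two chains in the statement consists of a single subspace, so there is nothing to prove; assume $n\ge 3$ and that the corollary holds with $n$ replaced by $n-1$. A unitary similarity, and a multiplication of $\varphi$ by a unitary on either side, change the subspaces $\ker\varphi(S^k)$ and $\ran\varphi(S^k)$ only by a single common unitary, hence preserve all inclusions among them and the strictness of those inclusions; so by Corollary~\ref{additional-observations} and Proposition~\ref{fixed-point} we may assume that $\varphi$ is unital and $\varphi(S)=S$.

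Write $T_k=\varphi(S^k)$ (so $T_1=S$) and let $e_1,\dots,e_n$ be the standard basis of $\bbC^n$. By Theorem~\ref{nested}, $\ker S=\ker\varphi(S)\subseteq\ker T_k$ and $\ran T_k\subseteq\ran\varphi(S)=\ran S$ for every $k$, and since $\ker S=\bbC e_1$ while $\ran S=\operatorname{span}\{e_1,\dots,e_{n-1}\}$, each $T_k$ has zero first column and zero last row. Writing matrices in block form with row index set $\{1,\dots,n-1\}\cup\{n\}$ and column index set $\{1\}\cup\{2,\dots,n\}$, this gives
$$
T_k=\begin{bmatrix}0 & A_k\\ 0 & 0\end{bmatrix},\qquad A_k\in M_{n-1}(\bbC).
$$
Regarding $A_k$ as an operator $\operatorname{span}\{e_2,\dots,e_n\}\to\operatorname{span}\{e_1,\dots,e_{n-1}\}$ and identifying each of these spaces with $\bbC^{n-1}$ in the obvious way, one has $\ker T_k=\bbC e_1\oplus\ker A_k$ and $\ran T_k=\ran A_k$, the first direct sum being along the fixed line $\bbC e_1$; hence it suffices to produce the two strict chains for the subspaces $\ker A_k$ and $\ran A_k$.

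Let $\cA'\subseteq M_{n-1}(\bbC)$ be the algebra of $(n-1)\times(n-1)$ upper-triangular Toeplitz matrices, with nilpotent Jordan block $J_{n-1}$. A direct computation shows that, in the same block form, $\sum_{k=1}^{n-1}c_k S^k=\begin{bmatrix}0 & g_0(J_{n-1})\\ 0 & 0\end{bmatrix}$, where $g_0(t)=\sum_{k=1}^{n-1}c_k t^{k-1}$; in particular $S=S^1$ has nonzero block $I_{n-1}$, so $A_1=I_{n-1}$. Since the norm of a matrix with a single nonzero block equals the norm of that block, and since $\varphi$ preserves the norm of $\sum_k c_k S^k$, the rule $\tilde\varphi(J_{n-1}^{k-1})=A_k$ defines a unital linear isometry $\tilde\varphi\colon\cA'\to M_{n-1}(\bbC)$:
$$
\|\tilde\varphi(g_0(J_{n-1}))\|=\bignorm{\sum_k c_k A_k}=\bignorm{\sum_k c_k T_k}=\bignorm{\sum_k c_k S^k}=\|g_0(J_{n-1})\|.
$$
Setting up this reduced isometry $\tilde\varphi$—i.e. observing that $\varphi$ respects the common off-diagonal block shape of $S^1,\dots,S^{n-1}$, so that it descends to a linear isometry in one lower dimension—is the one non-formal step; the remainder is the inductive hypothesis plus a dimension count.

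Finally, apply the inductive hypothesis to $\tilde\varphi$. Since $\tilde\varphi(J_{n-1}^{k-1})=A_k$, Corollary~\ref{nested-explicit} in dimension $n-1$ gives $\ker A_2\subsetneq\cdots\subsetneq\ker A_{n-1}$ and $\ran A_2\supsetneq\cdots\supsetneq\ran A_{n-1}$. Now prepend $A_1=I_{n-1}$: then $\ker A_1=\{0\}$, whereas by rank preservation (Corollary~\ref{nilpotence-preservation}) $\rk A_2=\rk T_2=\rk S^2=n-2$, so $\dim\ker A_2=1$ and $\ker A_1\subsetneq\ker A_2$; dually $\ran A_1=\bbC^{n-1}\supsetneq\ran A_2$ since $\rk A_2=n-2<n-1$. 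Thus $\ker A_1\subsetneq\cdots\subsetneq\ker A_{n-1}$ and $\ran A_1\supsetneq\cdots\supsetneq\ran A_{n-1}$, and translating these back via $\ker T_k=\bbC e_1\oplus\ker A_k$ and $\ran T_k=\ran A_k$ yields the two strict chains asserted by the corollary.
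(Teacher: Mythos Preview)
Your proof is correct and follows the same inductive scheme the paper has in mind: use Theorem~\ref{nested} to see that (after normalising so that $\varphi(S)=S$) every $\varphi(S^k)$ has zero first column and zero last row, pass to the induced unital isometry on $(n-1)\times(n-1)$ upper-triangular Toeplitz matrices, and invoke the result in dimension $n-1$. The paper records this in one line (``follows from Theorem~\ref{nested} by using induction'') and observes separately that strictness comes from rank preservation; you have simply written the details out in full, and handled strictness at the bottom of the chain explicitly.
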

\begin{proof}
This follows from Theorem~\ref{nested} by using induction. The inclusions are strict because $\varphi$ preserves the rank of matrices.
\end{proof}


\section{The structure}\label{the-structure-section}

%
%
%
%

Finally, we arrive at the second main result of our paper.

\begin{theorem}\label{structure}
If $\varphi:\cA\to M_n(\bbC)$ is a linear isometry, then there exist two $n\times n$ unitaries $U$ and $V$ such that
\[
\varphi(A)=UAV
\]
for every $A\in\cA$.
\end{theorem}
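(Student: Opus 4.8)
The plan is to reduce to the unital case, put $\varphi(S)=S$, and then bootstrap from the nesting information in Corollary~\ref{nested-explicit} together with the singular-value/partial-isometry data in Corollaries~\ref{local-decomposition}--\ref{nilpotence-preservation} to pin down $\varphi(S^k)$ for every $k$. First I would use Corollary~\ref{additional-observations} to multiply $\varphi$ by a unitary so that $\varphi(I)=I$, and then Proposition~\ref{fixed-point} to conjugate so that $\varphi(S)=S$. With these normalisations in force the goal becomes to show that $\varphi$ is the identity map, i.e. $\varphi(S^k)=S^k$ for all $k$, because then $\varphi(A)=A$ for all $A\in\cA$ (by linearity) and tracing back the two conjugations gives $\varphi(A)=UAV$ in general.

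Next I would exploit the rigidity forced by Corollary~\ref{nested-explicit}. Write $T_k=\varphi(S^k)$; by Theorem~\ref{nested} (and its proof, which already showed $x_k=y_k=0$) we have $T_k=\begin{bmatrix} 0 & A_k\\ 0 & 0\end{bmatrix}$ with respect to the splitting $\bbC^n=\bbC e_1\oplus(\bbC e_2\oplus\cdots\oplus\bbC e_n)$ in one direction, and simultaneously $T_k$ is supported away from the last coordinate in the other direction; so $T_k$ maps $\operatorname{span}\{e_2,\dots,e_n\}$ into $\operatorname{span}\{e_1,\dots,e_{n-1}\}$ and kills $e_1$. Combined with $\operatorname{ran}(T_k)\subseteq\operatorname{ran}(S)$, $\ker(S)\subseteq\ker(T_k)$, the rank equalities $\rk(T_k)=n-k$, and the fact that each $T_k$ is a partial isometry, one sees $T_k$ is forced to look like $S^k$ up to a unitary acting on the "interior" coordinates. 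The key additional constraint is that $\varphi$ is an \emph{isometry on all of $\cA$}, not just on each $S^k$: applying Corollary~\ref{local-decomposition} to elements $S+\mu S^k$, $S^2+\mu S^k$, etc., and comparing singular values (equivalently, using Proposition~\ref{numerical-range} and the numerical range, or computing traces of powers as in the proof of Theorem~\ref{nested}) should force the "interior unitaries" of the various $T_k$ to be compatible with one another and with $\varphi(S)=S$. I would proceed by induction on $k$: assuming $\varphi(S^j)=S^j$ for $j<k$, analyse $\varphi(S^{k-1}+\mu S^k)$ — since $S^{k-1}+\mu S^k$ has a very rigid singular-value structure and $\varphi(S^{k-1})=S^{k-1}$ is already fixed, the constraint $\varphi(S^{k-1})+\mu\varphi(S^k)=U(S^{k-1}+\mu S^k)V$ for all $\mu$ should leave no freedom for $\varphi(S^k)$ other than $S^k$ itself.

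The step I expect to be the main obstacle is exactly this inductive identification of $\varphi(S^k)$: knowing $\varphi(S^k)$ is a partial isometry of the right rank with nested range and kernel still leaves a unitary degree of freedom on an $(n-k)$-dimensional subspace, and killing that freedom requires genuinely using the isometry condition on two-parameter (or many-parameter) families $\alpha S^i+\beta S^k$ and extracting enough equations from equality of singular values. I would handle this by the resultant/trace-comparison machinery already developed in Sections~\ref{singular-values-section} and~\ref{spatial-properties-section}: for suitable test elements $R=\alpha I+\beta S^i+\gamma S^k\in\cA$ one has an explicit, highly constrained list of singular values of $R$ (because $R$ is itself upper-triangular Toeplitz, so $R^*R$ has banded structure), whereas $\varphi(R)=\alpha I+\beta S^i+\gamma T_k$ must have the same list; matching, say, $\tr\big((\varphi(R))^*\varphi(R)\big)^m$ for $m=1,2,\dots$ against $\tr\big((R^*R)^m\big)$ for all parameter values yields polynomial identities in $\alpha,\beta,\gamma$ whose coefficients are trace-polynomials in $A_k$ and its interaction with $S$, and these should force $A_k=S^{k-1}$ wedged into the right position, i.e. $T_k=S^k$. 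Once $\varphi$ fixes every $S^k$, linearity finishes the proof, and undoing the two initial conjugations produces the unitaries $U$ and $V$.
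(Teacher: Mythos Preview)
Your normalisation ($\varphi(I)=I$, $\varphi(S)=S$) matches the paper's, but from that point the two arguments diverge. The paper does \emph{not} induct on the exponent~$k$; it inducts on the dimension~$n$. The nesting result (first column and last row of each $\varphi(T)$ vanish for $T\in\cA_0$) means that the rule $\begin{bmatrix}0&T_1\\0&0\end{bmatrix}\mapsto\begin{bmatrix}0&*\\0&0\end{bmatrix}$ defines a unital linear isometry from the $(n-1)\times(n-1)$ Toeplitz algebra into $M_{n-1}$, and the inductive hypothesis then yields a \emph{single} unitary $X\in M_{n-1}$ with $\varphi(T)=U_0TV_0$ for \emph{every} $T\in\cA_0$ simultaneously, where $U_0=\diag(X,1)$ and $V_0=\diag(1,X^*)$. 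The endgame is to show $X$ is scalar, done in two stages: nilpotence of $TW$ for all $T\in\cA_0$ (with $W=V_0U_0$) together with Newton's identities forces $W$ to be diagonal, hence $X$ diagonal; then an entirely different tool enters --- positivity of the extension $\tilde\varphi:\cA+\cA^*\to M_n$, applied to the all-ones matrix and unpacked via a $2\times 2$ block-positivity criterion --- and this forces the diagonal entries of $X$ to coincide.

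Your plan to determine each $T_k$ separately by matching $\tr\big((\varphi(R)^*\varphi(R))^m\big)$ against $\tr\big((R^*R)^m\big)$ on test families $R=\alpha I+\beta S^{i}+\gamma S^k$ is not obviously wrong, but the step you yourself flag as the obstacle is genuinely open in your write-up: you assert these identities ``should force $A_k=S^{k-1}$'' without any verification, and for general $n$ and $k$ it is not clear that a three-parameter family of trace constraints suffices to kill the unitary freedom remaining in $T_k$ after the rank, nilpotence, partial-isometry and nesting data are imposed. (Already for $k=2$ the residual freedom in $T_2$ is governed by a choice of $(n-2)$-dimensional subspace of $\operatorname{span}\{e_1,\dots,e_{n-1}\}$ plus a unitary on it, and you have not shown that singular values of $\alpha I+\beta S+\gamma T_2$ determine all of this.) The paper circumvents exactly this difficulty by packaging all of $\cA_0$ into one induced isometry on $M_{n-1}$ and then closing with the positivity argument rather than further trace identities; if you want to push your route through, you would need either to carry out the trace-matching in full generality or to find a substitute for the positivity step.
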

\begin{proof}
As before, let us denote by $S$ the $n\times n$ Jordan block
\[
S=\begin{bmatrix}
0 & 1 & 0 & \dots & 0 \\
  & 0 & 1 &       & \\
\vdots  &   & \ddots &\ddots&    \\
  &&& 0&         1  \\
0  &&\dots&&         0  
\end{bmatrix}
\]
By Corollaries~\ref{additional-observations} and~\ref{nilpotence-preservation}, composing $\varphi$ with multiplications by appropriate unitary matrices on the left and on the right we may assume that $\varphi$ is unital and maps nilpotent matrices to nilpotent matrices. By Proposition~\ref{fixed-point}, further composing $\varphi$ with a unitary similarity  we may assume that $\varphi(S)=S$.

We will prove the statement of the theorem by induction on~$n$. If $n=1$, there is nothing to prove. Suppose that the statement has been proved for $n-1$, let us prove it for~$n$.

Let us denote the subalgebra in $\cA$ of all strictly upper triangular matrices by~$\cA_0$. Obviously, every matrix $A\in\cA$ can be written as $\alpha I+T$ for some $\alpha\in\bbC$ and $T\in\cA_0$. 

By Theorem~\ref{nested}, if $T\in\cA_0$, then the first column and the last row of $\varphi(T)$ are zero. Notice that a matrix $T\in\cA_0$ has the first column and the last row zero as well, and therefore it can be written as
\[
T=\begin{bmatrix}
0 & T_1\\
0 & 0^T
\end{bmatrix},
\]
where $T_1$ is an upper-triangular $(n-1)\times(n-1)$ Toeplitz matrix. We get:
\[
\varphi(T)=
\varphi(\begin{bmatrix}
0 & T_1\\
0 & 0^T
\end{bmatrix})=
\begin{bmatrix}
0 & *\\
0 & 0^T
\end{bmatrix},
\]
where $*$ stands for an unknown $(n-1)\times(n-1)$ matrix. This induces a linear isometry from the algebra of upper-triangular $(n-1)\times(n-1)$ matrices to $M_{n-1}(\bbC)$. Since $\varphi(S)=S$, this isometry is unital. By the induction hypothesis, there exists an $(n-1)\times(n-1)$ unitary $X$ such that
\[
\varphi(T)=
\varphi(\begin{bmatrix}
0 & T_1\\
0 & 0^T
\end{bmatrix})=
\begin{bmatrix}
0 & XT_1X^*\\
0 & 0^T
\end{bmatrix}.
\]
Define two $n\times n$ unitaries $U_0$ and $V_0$ by
\[
U_0=\begin{bmatrix}
X & 0\\
0^T & 1
\end{bmatrix}
\]
and
\[
V_0=\begin{bmatrix}
1 & 0^T\\
0 & X^*
\end{bmatrix}.
\]
So, if $T\in\cA_0$, then $\varphi(T)=U_0TV_0$, so that for an arbitrary $A\in\cA$ written as $A=\alpha I+T$, with $\alpha\in\bbC$ and $T\in\cA_0$, we have
\[
\varphi(\alpha I+T)=\alpha I+U_0TV_0.
\]
In what follows, we will show that $x=\gamma I_{n-1}$ for some scalar~$\gamma$. This will imply that $\varphi(T)=T$, finishing the proof.

To that end, consider the unital linear isometry $\psi:\cA\to M_n(\bbC)$ defined by
\[
\psi(A)=U_0^*\varphi(A)U_0.
\]
Denote the matrix $V_0U_0$ by~$W$. Then, if we write arbitrary $A\in\cA$ as $\alpha I+T$, with $\alpha\in\bbC$ and $T\in\cA_0$, we have
\[
\psi(\alpha I+T)=\alpha I+TW.
\]
Consider the space 
\[
\cL=\{TW\mid T\in\cA_0\}.
\]
Since $\psi$ is a unital isometry $\cA\to M_n(\bbC)$, it maps nilpotent matrices to nilpotent matrices by Corollary~\ref{nilpotence-preservation}. It follows that $\cL$ consists of nilpotent matrices. In particular,
\[
\tr((TW)^k)=0
\]
for every $T\in\cA_0$ and $k\ge 1$. Write $W=(w_{ij})$. Taking $T=S^{n-1}$ and $k=1$, we obtain
\[
w_{n1}=0.
\]
Considering $T=S^{n-2}$ with $k=1$, we get
\[
w_{n-1,1}+w_{n2}=0.
\]
Taking $k=2$ and using the fact that $w_{n1}=0$, we obtain
\[
w_{n-1,1}^2+w_{n2}^2=0.
\]
By \cite[Lemma 2.1.15(ii)]{RR00}, these two equalities imply
\[
w_{n-1,1}=w_{n2}=0.
\]
Similarly, taking $T=S^{n-3}$ with $k=1,2$, and $3$, we get
\[
w_{n-2,1}+w_{n-1,2}+w_{n3}=0,\quad w_{n-2,1}^2+w_{n-1,2}^2+w_{n3}^2=0\quad w_{n-2,1}^3+w_{n-1,2}^3+w_{n3}^3=0.
\]
Again, by \cite[Lemma 2.1.15(ii)]{RR00}, this means that 
\[
w_{n-2,1}=w_{n-1,2}=w_{n3}=0.
\]
Repeating the same procedure $n-1$ times shows that $W$ is, in fact, an upper-triangular matrix. Since $W$ is a unitary, its invariant subspaces are reducing, hence $W$ must be diagonal.

Write $W=\diag\{d_1,\dots,d_n\}$. Clearly $\abs{d_k}=1$ for all~$k$. Let us get back to the definition of $W=V_0U_0$. Writing $X=(x_{ij})_{i,j=1}^{n-1}$, we get:
\[
\begin{bmatrix}
1 & 0 & & 0 \\
0 & \bar x_{11} & \dots & \bar x_{n-1,1} \\
  & \vdots & & \vdots \\
0 & \bar x_{1,n-1} & \dots & \bar x_{n-1,n-1}
\end{bmatrix}
\cdot
\begin{bmatrix}
x_{11} & \dots & x_{1,n-1} & 0\\
\vdots & & \vdots & \\
x_{n-1,1} & \dots & x_{n-1,n-1} & 0\\
0 & & 0 & 1\\
\end{bmatrix}
=
\begin{bmatrix}
d_1 & & & \\
 & d_2 & & \\
 & & \ddots & \\
 & & & d_n
\end{bmatrix}.
\]
It is easy to see that this implies $X$ is diagonal, and $x_{11}=d_1$, $\bar x_{11}x_{22}=d_2$, $\bar x_{22}x_{33}=d_3$, $\dots$, $\bar x_{n-2,n-2}x_{n-1,n-1}=d_{n-1}$, $\bar x_{n-1,n-1}=d_n$. Denote the diagonal elements of $X$ by $u_1,\dots,u_{n-1}$. We obtain
\begin{align*}
u_{1}& =\, x_{11}=d_1, \\
u_{2}& =\, x_{22}=d_1d_2,\\
&\ \  \vdots\\
u_{n-1}& =\, x_{n-1,n-1}=d_1d_2\dots d_{n-1}=\bar d_n.
\end{align*}
Consider now $\tilde\varphi:\cA+\cA^*\to M_n(\bbC)$ defined by $\tilde\varphi(A+B^*)=\varphi(A)+\varphi(B)^*$. By \cite[Proposition~2.11]{Paulsen-book}, this is a well-defined positive map. Let us apply $\tilde\varphi$ to the matrix
\[
I+S+S^*+S^2+S^{*2}+\dots+S^{n-1}+S^{*n-1}=\begin{bmatrix}
1 & 1 & \dots & 1\\
1 & 1 & \dots & 1\\
\vdots & \vdots &  & \vdots\\
1 & 1 & \dots & 1\\
\end{bmatrix},
\]
which is positive. Since $\varphi(I)=I$, $\varphi(S)=S$, and $\varphi(\alpha I+T)=\alpha I+U_0TV_0$ (where $\alpha\in\bbC$ and $T\in\cA_0$), the image of this matrix under $\tilde\varphi$ is
\[
T_0:=\begin{bmatrix}
1 & 1 & u_1\bar u_2 & u_1\bar u_3 & \dots & u_1\bar u_{n-1}\\
1 & 1 & 1 & u_2\bar u_3 & \dots & u_2\bar u_{n-2}\\
\bar u_1u_2 & 1 & 1 & 1 & \dots & u_3\bar u_{n-3}\\
\bar u_1u_3 & \bar u_2u_3 & 1 & 1 & \dots & u_4\bar u_{n-4}\\
 & & \dots & & \\
\bar u_1u_{n-1} & \bar u_2u_{n-2} & \bar u_3u_{n-3} & \bar u_4u_{n-4} & \dots & 1
\end{bmatrix}
\]
Notice that $T_0$ must be positive. We will use now the following criterion for positivvity of operator matrices (see, e.g., \cite[Lemma~1.2]{Drit2004}): the operator matrix
\[
\begin{bmatrix}
RR^* & T^*\\
T & SS^*
\end{bmatrix}
\]
is positive if and only $T=SGR^*$ for some contraction~$G$. Let us apply this criterion to the upper-left $3\times 3$ corner of the matrix~$T_0$, which is
\[
\left[\begin{array}{cc|c}
1 & 1 & u_1\bar u_2\\
1 & 1 & 1 \\
\hline
\bar u_1u_2 & 1 & 1\\
\end{array}\right].
\]
We let $R=\begin{bmatrix}
1\\1
\end{bmatrix}$ and $S=\begin{bmatrix}
1
\end{bmatrix}$. The there must exists a $1\times 1$ contractive matrix $G=\begin{bmatrix}
g
\end{bmatrix}$ such that
\[
\begin{bmatrix}
\bar u_1u_2 & 1
\end{bmatrix}
=
\begin{bmatrix}
1
\end{bmatrix}
\cdot
\begin{bmatrix}
g
\end{bmatrix}
\cdot
\begin{bmatrix}
1 & 1
\end{bmatrix}
=
\begin{bmatrix}
g & g
\end{bmatrix}.
\]
It follows that $g=1$ and therefore $\bar u_1u_2=1$. This implies that 
\[
u_2=u_1.
\]
Repeating this procedure inductively for the upper-left $k\times k$ corner of~$T_0$ for $k=3,4,\dots,n$, we obtain $u_k=u_1$ for all $k=1,2,\dots,n-1$. It follows that $X=u_1I_{n-1}$, which proves the theorem.
\end{proof}


\begin{corollary}
Every linear isometry $\cA\to M_n(\bbC)$ is a complete isometry.
\end{corollary}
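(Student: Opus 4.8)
The plan is to deduce this corollary directly from Theorem~\ref{structure}. Given a linear isometry $\varphi:\cA\to M_n(\bbC)$, Theorem~\ref{structure} produces unitaries $U,V\in M_n(\bbC)$ with $\varphi(A)=UAV$ for all $A\in\cA$. So it suffices to observe that a map of the form $A\mapsto UAV$ is completely isometric.

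First I would recall that for each $k\in\bbN$, the amplification $\varphi^{(k)}=\varphi\otimes\mathrm{id}_{M_k(\bbC)}$ acts on $[A_{ij}]_{i,j=1}^k\in\cA\otimes M_k(\bbC)$ by $[A_{ij}]\mapsto[UA_{ij}V]=(U\otimes I_k)[A_{ij}](V\otimes I_k)$. Since $U\otimes I_k$ and $V\otimes I_k$ are unitary matrices in $M_{nk}(\bbC)$, multiplication by them on the left and right is an isometry of $M_{nk}(\bbC)$. Hence $\bignorm{\varphi^{(k)}([A_{ij}])}=\bignorm{[A_{ij}]}$ for every $k$ and every $[A_{ij}]\in\cA\otimes M_k(\bbC)$, which is exactly the statement that $\varphi$ is a complete isometry.

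There is essentially no obstacle here; the work has all been done in establishing Theorem~\ref{structure}. The only thing worth stating carefully is that left and right multiplication by a fixed unitary preserves the operator norm on $M_{nk}(\bbC)$, which is immediate from $\norm{UXV}=\norm{X}$ for unitary $U,V$ (as $\norm{UXV\xi}=\norm{XV\xi}$ and $V$ is a bijective isometry of $\bbC^{nk}$).

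\begin{proof}
By Theorem~\ref{structure}, there exist $n\times n$ unitary matrices $U$ and $V$ such that $\varphi(A)=UAV$ for every $A\in\cA$. Fix $k\in\bbN$. For $[A_{ij}]_{i,j=1}^k\in M_k(\cA)\subseteq M_k(M_n(\bbC))=M_{nk}(\bbC)$ we have
\[
\varphi^{(k)}\big([A_{ij}]\big)=[UA_{ij}V]=(U\otimes I_k)\,[A_{ij}]\,(V\otimes I_k).
\]
Since $U$ and $V$ are unitary, so are $U\otimes I_k$ and $V\otimes I_k$ in $M_{nk}(\bbC)$, and left and right multiplication by a unitary preserves the operator norm on $M_{nk}(\bbC)$. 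Hence
\[
\bignorm{\varphi^{(k)}\big([A_{ij}]\big)}=\bignorm{(U\otimes I_k)\,[A_{ij}]\,(V\otimes I_k)}=\bignorm{[A_{ij}]}.
\]
As this holds for every $k\in\bbN$, the map $\varphi$ is a complete isometry.
\end{proof}
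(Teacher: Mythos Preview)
Your proof is correct and is exactly the intended argument: the paper states this corollary without proof immediately after Theorem~\ref{structure}, treating it as an immediate consequence, and you have simply written out the obvious details (that $\varphi^{(k)}$ is left--right multiplication by the unitaries $U\otimes I_k$ and $V\otimes I_k$, hence isometric).
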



\begin{corollary}
If $\varphi:\cA\to M_n(\bbC)$ is a unital linear isometry,  
then $\varphi$ is an algebra homomorphism.
\end{corollary}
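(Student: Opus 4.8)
The plan is to read this off directly from Theorem~\ref{structure}. By that theorem, there exist $n\times n$ unitaries $U$ and $V$ with $\varphi(A)=UAV$ for all $A\in\cA$. The only additional ingredient needed is the hypothesis that $\varphi$ is unital: applying $\varphi$ to $I\in\cA$ gives $I=\varphi(I)=UV$, whence $V=U^{-1}=U^{*}$. Therefore $\varphi(A)=UAU^{*}$ for every $A\in\cA$.

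It then remains only to observe that conjugation by a fixed unitary is multiplicative: for $A,B\in\cA$ one has $\varphi(AB)=U(AB)U^{*}=(UAU^{*})(UBU^{*})=\varphi(A)\varphi(B)$, so $\varphi$ is an algebra homomorphism — in fact the restriction to $\cA$ of an inner automorphism of $M_n(\bbC)$. I do not expect any real obstacle here, since all the work has been carried out in establishing Theorem~\ref{structure}; the unitality assumption simply pins down the second unitary as the adjoint of the first, and the remaining step is the elementary fact that $X\mapsto UXU^{*}$ respects products.
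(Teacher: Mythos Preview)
Your proposal is correct and is exactly the argument the paper intends: the corollary is stated without proof because it follows immediately from Theorem~\ref{structure}, and your derivation that unitality forces $V=U^*$ and hence $\varphi(A)=UAU^*$ is precisely the missing one-line justification.
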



To conclude this section we consider now the 
operator system $\cT$ of all $n\times n$ Toeplitz matrices and determine the structure of all isometries $\varphi:\cT\rightarrow M_n(\bbC)$.

\begin{theorem} If $\cT$ is the subspace of $n\times n$ Toeplitz matrices, and if $\varphi:\cT\rightarrow M_n(\bbC)$ 
is a linear isometry, then there are unitaries $U, V\in M_n(\bbC)$ such that $\varphi(T)=UTV$ for every $T\in\cT$.
\end{theorem}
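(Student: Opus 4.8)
The plan is to bootstrap from Theorem~\ref{structure} (which already handles the half of $\cT$ consisting of upper‑triangular Toeplitz matrices) using the numerical‑range preservation established in Proposition~\ref{numerical-range}.

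First I would restrict the given isometry to $\cA$. Since $\cA\subseteq\cT$ and $\varphi$ is isometric on $\cT$, the map $\varphi|_\cA:\cA\to M_n(\bbC)$ is a linear isometry, so by Theorem~\ref{structure} there are unitaries $U_1,V_1$ with $\varphi(A)=U_1AV_1$ for all $A\in\cA$. Replacing $\varphi$ by $\varphi_1(\cdot):=U_1^*\varphi(\cdot)V_1^*$ — which is again a linear isometry of $\cT$ — I may therefore assume that $\varphi|_\cA=\mathrm{id}_\cA$; in particular $\varphi(I)=I$, so $\varphi$ is now a \emph{unital} linear isometry of the operator system $\cT$. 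It suffices to show that such a $\varphi$ is the identity on $\cT$, since then the original map has the asserted form with $U=U_1$ and $V=V_1$.

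The crux is that a unital linear isometry of $\cT$ is automatically self‑adjoint. By Proposition~\ref{numerical-range}, $W(\varphi(T))=W(T)$ for every $T\in\cT$. If $T\in\cT$ is Hermitian then $W(T)\subseteq\bbR$, hence $W(\varphi(T))\subseteq\bbR$, which forces $\varphi(T)$ to be Hermitian (a matrix with real numerical range is self‑adjoint). Since $\cT$, being an operator system, is the complex linear span of its Hermitian elements and $\varphi$ is linear, it follows that $\varphi(T^*)=\varphi(T)^*$ for every $T\in\cT$. Now write an arbitrary $T\in\cT$ as $T=A+B^*$ with $A,B\in\cA$ (put the diagonal into $A$ and take $B$ strictly upper triangular). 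Using $\varphi|_\cA=\mathrm{id}$ together with self‑adjointness,
\[
\varphi(T)=\varphi(A)+\varphi(B^*)=\varphi(A)+\varphi(B)^*=A+B^*=T,
\]
so $\varphi=\mathrm{id}_\cT$, and unwinding the normalization gives $\varphi(T)=U_1TV_1$ for the original map.

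I do not anticipate a serious obstacle: once Theorem~\ref{structure} is in hand, all the work is in the short observation that a unital isometry cannot rotate the self‑adjoint directions of $\cT$, which Proposition~\ref{numerical-range} delivers immediately. An alternative to the numerical‑range step would be to extend the unital isometry $\varphi_1$ to a unital completely positive map on $M_n(\bbC)$ that fixes $S$ and then invoke Arveson's boundary theorem, exactly as in the completely‑contractive proposition of Section~\ref{singular-values-section}; but the self‑adjointness route is shorter and avoids the complete‑positivity extension.
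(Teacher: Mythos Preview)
Your proposal is correct and follows essentially the same route as the paper: restrict to $\cA$, invoke Theorem~\ref{structure} to normalize so that $\varphi$ fixes $\cA$ pointwise (hence is unital), use Proposition~\ref{numerical-range} to deduce that hermitian Toeplitz matrices are sent to hermitian matrices and hence that $\varphi(B^*)=\varphi(B)^*$, and conclude $\varphi=\mathrm{id}_\cT$ from the decomposition $T=A+B^*$. The only cosmetic difference is that the paper first reduces to the unital case and then obtains $\varphi|_\cA(X)=UXU^*$, whereas you normalize directly by $U_1^*(\cdot)V_1^*$; the substance is identical.
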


\begin{proof} As in the proof of Theorem~\ref{structure}, it is sufficient to prove that  
if $\varphi:\cT\rightarrow M_n(\bbC)$ 
is a unital linear isometry, then there is a unitary $U\in M_n(\bbC)$ such that $\varphi(T)=UTU^*$ for every $T\in\cT$.
Under this assumption, let
$\varphi_0:\cA\rightarrow M_n(\bbC)$ be the restriction of $\varphi$ to $\cA$. Hence, $\varphi_0$ is a unital linear isometry, and so
there is a unitary $U$ for which $\varphi(X)=UXU^*$ for every $X\in\cA$. Because $\cT=\cA+\cA^*$, the 
function $\Phi:\cT\rightarrow M_n(\bbC)$ given by $\Phi(X+Y^*)=\varphi(X)+\varphi(Y)^*$, for $X,Y\in\cA$, 
is a well-defined linear extension of $\varphi_0$ to
$\cT$ and satisfies $\Phi(T)=UTU^*$ for every $T\in\cT$. All that remains is to prove that $\varphi=\Phi$.

If $T\in\cT$ is hermitian, then its numerical range $W(T)$ is a subset of $\bbR$. Because $\varphi$ is a unital isometry, it preserves numerical
range and thus $W(\varphi(T))$ is also a subset of $\bbR$, which implies that $\varphi(T)$ is hermitian. Therefore, $\varphi$ is hermitian preserving,
which in turn implies that $\varphi(Y^*)=\varphi(Y)^*$ for every $Y\in\cT$. Hence, if $X,Y\in\cA$, then
$\Phi(X+Y^*)=\varphi(X)+\varphi(Y)^*=\varphi(X)+\varphi(Y^*)=\varphi(X+Y^*)$, which proves that $\varphi=\Phi$. 
\end{proof}

\section{Unital Linear Isometries into Arbitrary Matrix Algebras}

To this point we have considered only linear isometric maps of $\cA$ (or $\cT$) back into $M_n(\mathbb C)$. If
the codomain of $\varphi$ is a matrix algebra $M_m(\mathbb C)$ with $m\neq n$, then it is possible
to describe the structure of $\varphi$ in some special cases---for example, in cases where the unital linear map
$\varphi$ is known already to be completely isometric. Two key aspects of determining this structure are
the Arveson--Hamana theory of the C$^*$-envelope \cite[Chapter 15]{Paulsen-book} and Arveson's
description of operator systems of matrices and their C$^*$-envelopes \cite{arveson2010}. 

By Hamana's theorem \cite{hamana1979}, if $\cS\subseteq\cB(\cH)$ is a unital subspace, then there exists a pair $(\iota, \cstare(\cS))$ 
consisting of a unital C$^*$-algebra $\cstare(\cS)$ and a unital completely isometric linear
map $\iota:\cS\rightarrow\cstare(\cS)$ with the following properties:
\begin{enumerate}
\item $\iota(\cS)$ generates the C$^*$-algebra $\cstare(\cS)$;
\item for every unital completely isometric linear map $\kappa:\cS\rightarrow \cB(\cH_\kappa)$ there is a
surjective unital C$^*$-algebra homomorphism $\pi:\cstar\left(\kappa(\cS)\right)\rightarrow\cstare(\cS)$
such that the linear map $\pi\circ\kappa:\cS\rightarrow\cstare(\cS)$ is completely isometric.
\end{enumerate}
The C$^*$-algebra $\cstare(\cS)$ is called the C$^*$-envelope of $\cS$ and is unique up to isomorphism. It is useful
to note that $\cstare(\cS)=\cstare(\cS + \cS^*)$, and that $\cS+\cS^*$ is an operator system. 

Arveson's structure theory for matrix systems \cite{arveson2010}
states that if $\cS\subseteq M_m(\mathbb C)$ is a $d$-dimensional operator system of $m\times m$ matrices, then:
\begin{enumerate}
\item[{\rm (a)}] there exists $d$-dimensional involutive vector space $Z$
with distinguished unit $1\in Z$ and 
tuples $\Gamma=(\Gamma_1,\dots,\Gamma_p)$ and $\Omega=(\Omega_1,\dots,\Omega_q)$ 
of unital $*$-linear maps $\Gamma_k:Z\rightarrow M_{n_k}(\mathbb C)$ and 
$\Omega_j:Z\rightarrow M_{m_j}(\mathbb C)$
such that
\begin{enumerate}
\item[{\rm (i)}] the range of each $\Gamma_k$ and each $\Omega_j$ is an irreducible operator system,
\item[{\rm (ii)}] for every $r\in\{1,\dots,q\}$, $s\in\mathbb N$, and 
$[z_{ij}]_{i,j}\in M_s(Z)$,
\[
\left\|\left[\Omega_r(z_{ij})\right]_{i,j}\right\|
\,\leq\,
\max_{1\leq k\leq p} \left\|\left[\Gamma_k(z_{ij})\right]_{i,j}\right\|;
\]
\end{enumerate}
\item[{\rm (b)}] there is a unitary $W\in M_m(\mathbb C)$ such that $\cS=W(\cS_{\Gamma,\Omega})W^*$, where
\[
\cS_{\Gamma,\Omega}=\left\{ \left(\displaystyle\bigoplus_{k=1}^{p}\Gamma_k(z)\otimes I_{\ell_k}\right)
\,\bigoplus\,\left(\displaystyle\bigoplus_{j=1}^q \Omega_j(z)\otimes I_{i_j}\right)\,:\,z\in Z\right\},
\]
and where 
$X\otimes I_\ell$ denotes the direct sum of $\ell$ copies of a matrix $X$;
\item[{\rm (c)}] 
$\cstare(\cS)=\cstare(\cS_{\Gamma,\Omega})=\displaystyle\bigoplus_{k=1}^p M_{n_k}(\mathbb C)$.
\end{enumerate}

Note, in particular, that if $\cstare(\cS)$ is a simple C$^*$-algebra, then necessarily $p=1$ in the description above.

\begin{theorem}\label{representations-1} If $\varphi:\cA\rightarrow M_m(\mathbb C)$ is a unital linear isometry, then 
$m\geq n$. If, in addition, $\cstare\left(\varphi(\cA)\right)=M_n(\bbC)$, then there are 
a unitary $U\in M_m(\mathbb C)$, a positive integer $\ell$, and a unital
linear contraction $\psi:\cA\rightarrow M_{m-\ell n}(\mathbb C)$ such that
\[
\varphi(X) = U\left([X\otimes I_\ell]\oplus\psi(X)\right)U^*,
\]
for every $X\in\cA$.
\end{theorem}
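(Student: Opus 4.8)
The statement to be proved is Theorem~\ref{representations-1}: if $\varphi:\cA\to M_m(\bbC)$ is a unital linear isometry, then $m\ge n$, and if moreover $\cstare(\varphi(\cA))=M_n(\bbC)$, then $\varphi$ decomposes (after a unitary conjugation) as $X\mapsto [X\otimes I_\ell]\oplus\psi(X)$ for some unital contraction $\psi$ into a smaller matrix algebra. The whole proof rests on three inputs already available in the excerpt: (1) by Theorem~\ref{structure}, every linear isometry $\cA\to M_n(\bbC)$ is of the form $A\mapsto UAV$, hence a \emph{complete} isometry, so $\cA$ is its own ``model'' and its $C^*$-envelope is all of $M_n(\bbC)$; (2) Hamana's universal property of the $C^*$-envelope; and (3) Arveson's structure theorem for operator systems of matrices, as quoted in parts (a)--(c) just above the theorem. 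I would organize the argument around reconciling the given isometry $\varphi$ with the canonical Arveson form of the operator system $\varphi(\cA)+\varphi(\cA)^*\subseteq M_m(\bbC)$.

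\textbf{Step 1: identify the $C^*$-envelope of $\cA$.} First I would record that, because (by Theorem~\ref{structure} and its first corollary) every linear isometry $\cA\to M_n(\bbC)$ is completely isometric, the inclusion $\cA\hookrightarrow M_n(\bbC)$ is a completely isometric representation whose generated $C^*$-algebra is $M_n(\bbC)$ (it is generated by $S$, and $M_n(\bbC)$ is the only $C^*$-algebra containing an irreducible $S$). Since $M_n(\bbC)$ is simple, the identity map $\cA\to M_n(\bbC)$ has no proper boundary quotient, so $\cstare(\cA)=M_n(\bbC)$. Equivalently, $\cstare(\cA+\cA^*)=\cstare(\cT)=M_n(\bbC)$.

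\textbf{Step 2: pass to the operator system and apply Arveson's structure theorem.} Let $\cS=\varphi(\cA)+\varphi(\cA)^*$; this is an operator system in $M_m(\bbC)$, and since $\varphi$ is a unital isometry, the operator system $\cS$ contains a completely isometric copy of... well, here is a subtlety: a priori we only know $\varphi$ is a unital \emph{isometry}, not a complete isometry, onto its image. But its image sits in $M_m(\bbC)$, and by the hypothesis $\cstare(\varphi(\cA))=M_n(\bbC)$ is simple, so by the remark following the quoted structure theorem, in the Arveson decomposition of $\cS=W\cS_{\Gamma,\Omega}W^*$ we must have $p=1$. Thus there is a single irreducible unital $*$-linear map $\Gamma_1:Z\to M_{n_1}(\bbC)$ with $\cstare(\cS)=M_{n_1}(\bbC)$, forcing $n_1=n$, and every $\Omega_j:Z\to M_{m_j}(\bbC)$ satisfies the norm-domination inequality (a)(ii). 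Since $\Gamma_1$ is a completely isometric unital $*$-representation-like map onto an irreducible matrix system with envelope $M_n(\bbC)$, the range of $\Gamma_1$ is (a unitary conjugate of) $\cA+\cA^*$ itself, and composing $\varphi$ with $\Gamma_1^{-1}$ on the $\Gamma$-block recovers the identity on $\cA$; so $\cA+\cA^*$ plays the role of $Z$ and $\Gamma_1$ is the identity inclusion.

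\textbf{Step 3: read off the decomposition and the bound $m\ge n$.} Writing out part (b) with $p=1$, every element $\varphi(X)+\varphi(Y)^*$ of $\cS$ is unitarily equivalent to $(X+Y^*)\otimes I_{\ell_1}$ in the first block and $\bigoplus_j\Omega_j(X+Y^*)\otimes I_{i_j}$ in the remaining blocks. Restricting to $Y=0$ (i.e.\ to $X\in\cA$) and setting $\ell:=\ell_1$ and $\psi:=\bigoplus_j\Omega_j\otimes I_{i_j}$ (suitably regrouped as a single map into $M_{m-\ell n}(\bbC)$), I get precisely $\varphi(X)=U([X\otimes I_\ell]\oplus\psi(X))U^*$. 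The inequality (a)(ii) with $s=1$ shows $\|\psi(X)\|\le\|\Gamma_1(X)\|=\|X\|$, so $\psi$ is a unital linear contraction; and it is automatically unital because all the $\Omega_j$ are unital. Finally, the mere presence of the $X\otimes I_\ell$ block with $\ell\ge 1$ and $n\times n$ matrices $X$ forces $m\ge \ell n\ge n$, giving the first assertion; and it also covers the degenerate case where there are no $\Omega$-blocks at all, in which case $m=\ell n$ and $\psi$ acts on $M_0=0$ (one should note this vacuously satisfies the statement, or absorb it by allowing $m-\ell n=0$).

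\textbf{Main obstacle.} The delicate point is Step 2: justifying that the distinguished Arveson data has $p=1$ and that the unique $\Gamma_1$-block is genuinely \emph{the} inclusion $\cA\hookrightarrow M_n(\bbC)$, not merely some abstractly completely isometric copy. This requires knowing $\cstare(\varphi(\cA))=\cstare(\cS)$ (true since a space and its image under the adjoint generate the same $C^*$-algebra), that this envelope is simple (the hypothesis), and the uniqueness clause of Arveson's theorem together with the fact, from Theorem~\ref{structure}, that $\cA+\cA^*$ with its concrete matrix structure is \emph{the} model operator system — i.e.\ any completely isometric unital copy of it inside matrices has the form $W(\cdot)W^*$ on the envelope block. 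One must also be a little careful that $\varphi$ restricted to $\cA$ being an isometry with simple envelope upgrades, via the universal property, to being completely isometric onto a copy compatible with the Arveson normal form; this is exactly where the simplicity hypothesis on $\cstare(\varphi(\cA))$ is doing its work. The rest of the argument is bookkeeping: regrouping the $\Omega_j\otimes I_{i_j}$ into one contraction $\psi$, checking unitality, and extracting $m\ge n$.
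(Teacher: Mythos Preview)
Your argument for the second assertion is essentially the paper's approach: pass to the operator system $\varphi(\cA)+\varphi(\cA)^*$, apply Arveson's structure theorem, use simplicity of the envelope to force $p=1$ and $n_1=n$, and read off the block decomposition with the $\Omega$-blocks assembled into a unital contraction $\psi$. The paper makes your hand-wavy identification of the $\Gamma_1$-block with ``the inclusion of $\cA$'' precise by compressing: it forms $\delta(X)=EW^*\varphi(X)WE^*\in M_n(\bbC)$, checks that $\delta$ is a unital \emph{isometry} via the norm-domination property [a(ii)] with $s=1$, and then invokes Theorem~\ref{structure} to conclude $\delta(X)=VXV^*$. You should do the same; the sentence ``composing $\varphi$ with $\Gamma_1^{-1}$ on the $\Gamma$-block recovers the identity on $\cA$'' is exactly the step that needs this justification, and Theorem~\ref{structure} is the tool.

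There is, however, a genuine gap in your treatment of the \emph{first} assertion. The inequality $m\ge n$ is claimed for an arbitrary unital linear isometry $\varphi:\cA\to M_m(\bbC)$, \emph{without} the hypothesis $\cstare(\varphi(\cA))=M_n(\bbC)$. Your argument for $m\ge n$ appears only in Step~3, where it is deduced from the presence of the $X\otimes I_\ell$ block in the Arveson decomposition --- but that decomposition was obtained only under the additional envelope hypothesis. So as written you have not proved the first sentence of the theorem. The paper handles this independently and first: since a unital isometry preserves numerical range (Proposition~\ref{numerical-range}), $W(\varphi(S))$ is the disc of radius $\cos\frac{\pi}{n+1}$; by the Haagerup--de~la~Harpe theorem this forces $\varphi(S)$ to have a direct summand unitarily equivalent to the $n\times n$ Jordan block $S$, hence $m\ge n$. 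You need some such direct argument (numerical-range or singular-value based) before invoking anything about $C^*$-envelopes.
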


\begin{proof} Because $\varphi$ is a unital isometry, $\varphi$ preserves numerical range. Thus, the numerical range
of $\varphi(S)$ is a circular disc about the origin of radius $\cos\frac{\pi}{n+1}$, which implies that $\varphi(S)$ is unitarily
equivalent in $M_m(\bbC)$ to a matrix of the form $S\oplus Y$ \cite[Theorem 1(2)]{HdlH}. Therefore, since $S\in M_n(\bbC)$ and $S\oplus Y\in M_m(\bbC)$, 
we must have $m\geq n$.

Suppose now that $\cstare\left(\varphi(\cA)\right)=M_n(\bbC)$.
Because $\varphi$ preserves numerical range, the map $\varphi$ is
hermitian preserving. Therefore, $\tilde\varphi(\cA+\cA^*)=\varphi(\cA)+\varphi(\cA)^*$, where $\tilde\varphi(X+Y^*)=\varphi(X)+\varphi(Y)^*$.
Noting that $\cT=\cA+\cA^*$, the unital linear map $\tilde\varphi:\cT\rightarrow M_m(\bbC)$ is a unital linear contraction
and $\tilde\varphi(\cT)=\varphi(\cA)+\varphi(\cA)^*$ is an operator system. By hypothesis, the C$^*$-envelope of 
$\tilde\varphi(\cT)=M_n(\bbC)$. Thus,
in Arveson's description of the operator system $\tilde\varphi(\cT)$ as $W(\cS_{\Gamma,\Omega})W^*$ above, 
it must be that $\Gamma$ is a $1$-tuple and that $\tilde\varphi(\cT)$ has the form
\[
\tilde\varphi(\cT)=\left\{ W\left( \left(\Gamma(z)\otimes I_\ell\right)\bigoplus \left(\bigoplus_{j=1}^q\Omega_j(z)\otimes I_{i_j}\right)\right)W^*\,:\,z\in Z\right\},
\]
for some involutive vector space $Z$ with distinguished unit $1\in Z$ and 
unital $*$-linear maps $\Gamma:Z\rightarrow M_{n_1}(\mathbb C)$ and 
$\Omega_j:Z\rightarrow M_{m_j}(\mathbb C)$ that have the properties [a(i)] and [a(ii)] indicated above.
In particular, because $M_n(\mathbb C)=\cstare(\varphi(\cA))=\cstare(\tilde\varphi(\cT))=\cstare(\cS_{\Gamma,\Omega})=M_{n_1}(\mathbb C)$,
we deduce that $n_1=n$. Thus, if $E$ is the block matrix 
\[
E=\left[ P\;\; 0_{1}\;\;\cdots\;\;0_{q}\right],
\]
where each $0_j$ denotes an $n\times (m_ji_j)$ matrix
of zeros and where $P$ is the $n\times (\ell n_1)$ matrix of the form $[I_n\;\; 0]$, 
then the linear map $\delta:\cA\rightarrow M_n(\mathbb C)$ given by
$\delta(X)=EW^*\varphi(X)WE^*$ is unital and contractive. If, given $X\in\cA$, 
$z$ is the unique element of $Z$ for which 
\[
\varphi(X)=W\left((\Gamma(z)\otimes I_\ell)\oplus(\Omega_1(z)\otimes I_{i_1})\oplus\cdots\oplus(\Omega_q(z)\otimes I_{i_q})\right)W^*,
\]
then $\delta(X)=\Gamma(z)$ and
\[
\|X\|=\|\varphi(X)\|=\max\left\{\|\Gamma(z)\|,\,\|\Omega_1(z)\|,\dots,\,\|\Omega_q(z)\|\right\}=\|\Gamma(z)\|=\|\delta(X)\|.
\]
(The second-to-last equality above is a result of 
property [a(ii)] in the case $s=1$.) Therefore, by Theorem~\ref{structure}, there is a unitary $V\in M_n(\mathbb C)$
such that $\delta(X)=VXV^*$ for every $X\in\cA$. Thus, 
$\varphi(X)=W\left( VXV^*\oplus\Omega_1(z)\oplus\cdots\oplus\Omega_q(z)\right)W^*$,
where $z$ is the unique element of $Z$ that yields $\varphi(X)$.
Because the map 
\[
X\mapsto (\Omega_1(z)\otimes I_{i_1})\oplus\cdots\oplus(\Omega_q(z)\otimes I_{i_q}),
\]
where $z$ is the unique element of $Z$ arising from $X\in\cA$,
defines a unital contractive linear map 
$\psi:\cA\rightarrow M_{m-n}(\mathbb C)$ by property [a(ii)] in the case $s=1$, in setting
\[
U=W\left((V\otimes I_\ell)\oplus (I_{m_1}\otimes I_{i_1})\oplus\dots\oplus (I_{m_q}\otimes I_{i_q})\right)
\]
we obtain a unitary $U$ for which $\varphi(X) = U\left([X\otimes I_\ell]\oplus\psi(X)\right)U^*$
for every $X\in \cA$.
\end{proof}

Theorem~\ref{representations-1} also admits the following formulation
in the category of unital operator spaces and unital completely contractive maps.

\begin{theorem}\label{representations-2} If $\varphi:\cA\rightarrow M_m(\mathbb C)$ is a unital completely 
isometric linear map, then $n\le m$ and there are 
a unitary $U\in M_m(\mathbb C)$, a positive integer $\ell$, and a unital
completely contractive map $\psi:\cA\rightarrow M_{m-\ell n}(\mathbb C)$ such that
\[
\varphi(X) = U\left([X\otimes I_\ell]\oplus\psi(X)\right)U^*,
\]
for every $X\in\cA$.
\end{theorem}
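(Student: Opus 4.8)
The plan is to reduce to Theorem~\ref{representations-1}: a unital completely isometric map is in particular a unital isometry, so the only thing to verify is that the extra hypothesis $\cstare(\varphi(\cA))=M_n(\bbC)$ appearing there becomes \emph{automatic} once $\varphi$ is completely isometric. First I would record that $\cstare(\cA)=M_n(\bbC)$. Indeed, the C$^*$-subalgebra of $M_n(\bbC)$ generated by $\cA$ is all of $M_n(\bbC)$, since the Jordan block $S$ is an irreducible operator: one has $I-S^*S=E_{11}$ and $I-SS^*=E_{nn}$, and multiplying $E_{nn}$ by powers of $S$ and $S^*$ recovers every matrix unit. As $M_n(\bbC)$ is simple, its only unital C$^*$-quotients are itself and $0$, so Hamana's universal property forces $\cstare(\cA)=M_n(\bbC)$, realised by the inclusion $\cA\subseteq M_n(\bbC)$. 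Now, if $\varphi$ is a unital complete isometry, then $\varphi$ is a unital completely isometric isomorphism of $\cA$ onto $\varphi(\cA)$, so by the uniqueness in Hamana's theorem $\cstare(\varphi(\cA))$ is $*$-isomorphic to $\cstare(\cA)=M_n(\bbC)$; in particular it is simple.

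With this established I would essentially rerun the proof of Theorem~\ref{representations-1}. Since $\varphi$ is a unital isometry it preserves numerical range, so the Haagerup--de la Harpe theorem gives $n\le m$ and a unitary $W$ conjugating $\tilde\varphi(\cT)$ into Arveson's normal form $\cS_{\Gamma,\Omega}$; simplicity of $\cstare(\cS_{\Gamma,\Omega})=\bigoplus_k M_{n_k}(\bbC)$ forces $p=1$ and $n_1=n$. Compressing $\varphi$ onto the $\Gamma$-block then yields a unital isometry $\delta:\cA\to M_n(\bbC)$, which by Theorem~\ref{structure} has the form $\delta(X)=VXV^*$, and reassembling $U$ from $W$ and $V$ gives $\varphi(X)=U([X\otimes I_\ell]\oplus\psi(X))U^*$. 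The one genuinely new point relative to Theorem~\ref{representations-1} is that here $\psi$ must be \emph{completely} contractive rather than merely contractive. This follows from property [a(ii)] in Arveson's description, which holds at every matrix level $s\in\bbN$ (Theorem~\ref{representations-1} invoked only $s=1$): hence $z\mapsto\bigoplus_j \Omega_j(z)\otimes I_{i_j}$ is completely contractive with respect to the norm induced by $\Gamma$, i.e. with respect to the operator norm on $\cA$. Equivalently, $\psi$ is a compression $\psi(X)=E\,W^*\varphi(X)\,WE^*$ of the completely contractive map $\varphi$, and compressions of completely contractive maps are completely contractive.

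The main subtlety to watch is the passage through the C$^*$-envelope: Theorem~\ref{representations-1} was phrased with the literal equality $\cstare(\varphi(\cA))=M_n(\bbC)$, whereas a complete isometry only transports the C$^*$-envelope up to a canonical $*$-isomorphism. This is harmless because Arveson's structure theorem uses $\cstare(\cS_{\Gamma,\Omega})\cong\bigoplus_k M_{n_k}(\bbC)$ only up to isomorphism, and the simplicity of $M_n(\bbC)$ is all that is needed to conclude $p=1$ and $n_1=n$; if one prefers, one may first conjugate $\varphi(\cA)$ by a suitable unitary so that its C$^*$-envelope is literally $M_n(\bbC)$ inside $M_m(\bbC)$. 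A secondary point is to confirm that $\tilde\varphi$ on $\cT=\cA+\cA^*$ is still well defined with $\tilde\varphi(\cT)$ an operator system: this holds because $\varphi$, being unital and isometric, preserves numerical range and is therefore hermitian preserving, exactly as in the proof of Theorem~\ref{representations-1}.
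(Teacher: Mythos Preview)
Your proposal is correct and follows essentially the same approach as the paper: establish $\cstare(\cA)=M_n(\bbC)$ from the simplicity of $M_n(\bbC)$, use that a unital complete isometry transports the C$^*$-envelope so that the extra hypothesis of Theorem~\ref{representations-1} is satisfied, and then upgrade $\psi$ from contractive to completely contractive via property [a(ii)] at all matrix levels. Your treatment is in fact slightly more careful than the paper's in flagging the ``equal versus isomorphic'' issue for the C$^*$-envelope and in offering the alternative compression argument for the complete contractivity of $\psi$.
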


\begin{proof} As before, we note that $\varphi$ admits a unital completely contractive extension
$\tilde\varphi$ to the operator system $\cT=\cA+\cA^*$ via $\tilde\varphi(X+Y^*)=\varphi(X)+\varphi(Y)^*$,
for $X,Y\in\cA$.
Because $\varphi$ is unital and completely isometric, the operator spaces $\cA$
and $\varphi(\cA)$ have the same C$^*$-envelopes.  
However, because the C$^*$-algebra $\cstar(\cA)$ generated by $\cA$ is $M_n(\mathbb C)$, which is simple,
the C$^*$-envelope $\cstare(\cA)$ of $\cA$ necessarily coincides with $\cstar(\cA)$. 
Therefore, by Theorem~\ref{representations-1}, 
there are a unitary $U\in M_m(\mathbb C)$, a positive integer $\ell$, and a unital
linear contraction $\psi:\cA\rightarrow M_{m-\ell n}(\mathbb C)$ such that
\[
\varphi(X) =  U\left([X\oplus I_\ell] \oplus \psi(X)\right) U^*,
\]
for every $X\in\cA$. The proof of Theorem~\ref{representations-1} also shows that the map $ \psi$ is
constructed from the maps $\Omega_1,\dots,\Omega_q$, which implies that $\psi$ is completely contractive by 
property [a(ii)] and by the fact that $\varphi$ is completely isometric.
\end{proof}

Theorem~\ref{representations-2} above is predicted by a theorem of Blecher and 
Labuschagne \cite[Corollary 2.5(3)]{blecher--labuschagne2002}.
Although the proofs of Theorem~\ref{representations-2} and the Blecher--Labuschagne theorem 
are very different, both these results require, in one way or another, the Arveson--Hamana 
theory of the C$^*$-envelope.


%
%


\end{document}